\newtheorem{thm}{Theorem}[section]
\newtheorem{cor}[thm]{Corollary}
\newtheorem{lem}[thm]{Lemma}
\newtheorem{prop}[thm]{Proposition}
\newtheorem{rem}[thm]{Remark}
\theoremstyle{plain}
\theoremstyle{definition}
\newtheorem{defn}[thm]{Definition}
\theoremstyle{remark}
\newcommand\blfootnote[1]{%
	\begingroup
	\renewcommand\thefootnote{}\footnote{#1}%
	\addtocounter{footnote}{-1}%
	\endgroup
}
\newcommand{\N}{\mathbb{N}}
\newcommand{\R}{\mathbb{R}}
\newcommand{\X}{\mathbb{X}}
\def\ba{\begin{eqnarray*}}
\def\ea{\end{eqnarray*}}
\def\bee{\begin{equation}}
\def\ene{\end{equation}}
\def\la{\langle}
\def\ra{\rangle}
\def\e{\varepsilon}
\title{CHARACTERIZATION OF GREEDY BASES IN BANACH SPACES}
\author{Pablo M. Bern\'a, \'Oscar Blasco}
\date{}
\begin{document}
		\maketitle

		\begin{abstract} We shall present a new characterization of  greedy bases and 1-greedy bases in terms of  certain functionals defined using  distances  to one dimensional subspaces generated by the basis. We also introduce a new property that unifies the notions of unconditionality and democracy and allows us to recover a better dependence on the constants.\end{abstract}
\blfootnote{\hspace{-0.031\textwidth} 2000 Mathematics Subject Classification. 46B15, 41A65.\newline
		\textit{Key words and phrases}: thresholding greedy algorithm, unconditional basis, Property (A).\newline
		The first author was supported by GVA PROMETEOII/2013/013. The second author was supported by the Spanish Project MTM2014-53009-P.}
		
\begin{section}{INTRODUCTION}
	Let $(\X,\Vert \cdot \Vert)$ be a real Banach space and let $\mathcal{B}=(e_n)_{n=1}^{\infty}$ be a semi-normalized Schauder basis of $\X$ with biorthogonal functionals $(e_n^{*})_{n=1}^{\infty}$, i.e, $0<\inf_n\Vert e_n\Vert \leq \sup_n\Vert e_n\Vert<\infty$ and for each $x\in \X$ there exists a unique expansion $x=\sum_{n=1}^\infty e_n^*(x) e_n$.  As usual  $supp(x)=\{n\in \N: e_n^*(x)\ne 0\}$, $|A|$ stands for the cardinal of $A$, $P_A(x)=\sum_{n\in A} e_n^*(x)e_n$ and $1_A=\sum_{n\in A} e_n$. Throughout the paper, we write $\tilde x= (e_n^*(x))_{n\in \N}\in c_0(\N)$, $\|\tilde x \|_\infty=\sup_n |e_n^*(x)|$ and   $x y=0$  whenever $supp(x)\cap supp(y)=\emptyset$. We use the notation $\X_c$ for  the subspace of $\X$ of elements with finite support, i.e. $x\in \X$ and $|supp(x)|<\infty$ or $\tilde x\in c_{00}(\mathbb N)$. Also for each $m\in \N$, $|A|=m$ and $(\e_n)_{n\in A}\in \{\pm 1\}$,  we  denote by $1_{\e A}=\sum_{n\in A} \e_n e_n$, by  $[1_{\e A}]$ the one-dimensional subspace generated by $1_{\e A}$ and  by $[e_n, n\in A ]$ the $m$-dimensional subspace generated by $\{e_n, n\in A\}$.

Recall that a basis $\mathcal{B}$ in a Banach space $\X$ is called \textit{unconditional} if  any rearrangement of the series $\sum_{n=1}^{\infty}e_{n}^{*}(x)e_n$ converges in norm to $x$ for any $x\in \X$. This turns out to be  equivalent the fact that the projections $P_A$
are uniformly bounded on all sets $A$, i.e. there exists a constant $C>0$ such that
\bee\label{su}
\Vert P_A (x)\Vert \leq C\Vert x\Vert,\;\; x\in\X \hbox{ and }\; A\subset\mathbb{N}.
\ene
In such a case we say that $\mathcal B$ is a $C$-suppression
unconditional basis. The smallest constant that satisfies \eqref{su} is the so-called \textit{suppression constant} and it is denoted by $K_{su}$. Moreover, we have that $$K_{su} = \sup\lbrace \Vert P_A\Vert : A\subseteq\mathbb{N} \mbox{\;is finite}\rbrace = \sup\lbrace \Vert P_A\Vert : A\subseteq\mathbb{N} \mbox{\;is cofinite}\rbrace.$$
In particular, for unconditional bases one has that $x=\sum_{n=1}^\infty e^*_{\pi(n)}(x)e_{\pi(n)}$ where $\pi:\N\to \N$ is chosen so that $|e^*_{\pi(n)}(x)|\ge |e^*_{\pi(n+1)}(x)|$ for all $n\in \N$.

For each $x\in \X$ and $m\in \N$, S.V.Konyagin and V.N.Temlyakov defined in \cite{VT}  a \textit{greedy sum} of $x$ of order $m$ by $$G_m(x) = \sum_{n=1}^me_{\pi(n)}^{*}(x)e_{\pi(n)},$$ where $\pi$ is a \textit{greedy ordering}, that is $\pi: \mathbb{N}\longrightarrow\mathbb{N}$  is a permutaion such that $supp(x)=\lbrace j : e_j^{*}(x) \neq 0\rbrace\subseteq \pi(\mathbb{N})$ and $\vert e^*_{\pi(i)}(x)\vert \geq \vert e^*_{\pi(j)}(x)\vert$ for $i\leq j$.  Any sequence $(G_m(x))_{m=1}^{\infty}$ is called  a \textit{greedy approximation} of $x$. Of course we can have several greedy sums of the same order whenever the sequence $(e_j^*(x))_{j=1}^\infty$ contains several terms with the same absolute value.

	Given $x = \sum_{i=1}^{\infty}e_i^{*}(x)e_i \in \X$, we define the \textit{natural greedy ordering} for $x$ as the map $\rho: \mathbb{N}\longrightarrow\mathbb{N}$ such that $supp(x) \subset \rho(\mathbb{N})$ and so that if $j<k$ then either $\vert e_{\rho(j)}^*(x)\vert > \vert e_{\rho(k)}^*(x)\vert$ or $\vert e_{\rho(j)}^*(x)\vert = \vert e_{\rho(k)}^*(x)\vert$ and $\rho(j)<\rho(k)$. The $m$-\textit{th greedy sum} of $x$ is $$\mathcal{G}_m[\X,\mathcal{B}](x):=\mathcal{G}_m(x) = \sum_{j=1}^m e_{\rho(j)}^*(x)e_{\rho(j)},$$
	and the sequence of maps $\lbrace\mathcal{G}_m\rbrace_{m=1}^\infty$ is known as the \textit{greedy algorithm} associated to $\mathcal{B}$ in $\X$.
With this notation out of the way we have that  \bee \label{qG}
\lim_{m\to \infty}\|x- \mathcal G_m(x)\|=0,\; \ene
for any $x\in \X$ whenever $\mathcal B$ is unconditional.

Konyagin and Temlyakov (see  \cite{VT}) also introduced the term of {\it quasi-greedy basis} for the basis satisfying (\ref{qG}) and later Wojtaszczyk (see \cite{Woj}) proved that condition (\ref{qG}) is actually equivalent to the existence of a universal constant $C>0$ such that
\bee \label{qG1}
\|x- \mathcal G_m(x)\|\le C\|x\|,\; x\in \X,\; m\in \N.
\ene
	Of course this means for a (possibly different) constant that
\bee \label{qG2}
\|\mathcal G_m(x)\|\le C\|x\|,\; x\in \X,\; m\in \N.
\ene
Some authors denote the\textit{ quasi-greedy constant} $C_{qg}$ the best one satisfying (\ref{qG2}) while others use the one satisfying both  (\ref{qG1}) and (\ref{qG2}).
Since $\mathcal G_m(x)=P_\Lambda(x)$ for given $\Lambda$ with $|\Lambda|=m$, one has that any $C$-suppression unconditional basis is also $C$-suppression quasi-greedy basis. Hence $C_{qg}\le K_{su}$.

Recently Albiac and Ansorena (\cite[Theorem 2.1]{AA1}) showed that
 $\mathcal B$ is $1$-suppression unconditional if and only  if
$\sup_{m\in \N}\|\mathcal G_m(x)\|\le \|x\|$ if and only if
$\sup_{m,k\in \N}\{\|\mathcal G_m(x)\|, \|x-\mathcal G_k(x)\|\}\le \|x\|$.

 For each $m\in \N$, the $m$-\textit{term error of approximation} with respect to $\mathcal{B}$ is defined as
	$$\sigma_m(x,\mathcal{B}) = \sigma_m(x) := \inf\lbrace d(x, [e_n, n\in A ])\; :\; A\subset \mathbb{N}, \vert A\vert = m\rbrace.$$
	
Clearly $\sigma_m(x)\le \|x- \mathcal G_m(x)\|$.
	Bases where the greedy algorithm is efficient in the sense that the error we make when approximating $x$ by $\mathcal{G}_m(x)$ is comparable with $\sigma_m(x)$ were first considered in \cite{VT} and called greedy bases. Namely a basis $\mathcal{B}$ is said to be \textit{greedy} if there exists an absolute constant $C\geq 1$ such that
	\begin{eqnarray}\label{greedy}
	\Vert x-\mathcal{G}_m(x)\Vert \leq C\sigma_m(x),\; \forall x\in \X,\; \forall m\in\mathbb{N}.
	\end{eqnarray}
	In this case, we will say that $\mathcal{B}$ is $C$-greedy. The smallest constant $C$ that satisfies \eqref{greedy} is the \textit{greedy constant} and is denoted by $C_g$.
	
In the same paper, a basis $\mathcal{B}$ was said to be \textit{democratic} if there is a constant $D\geq 1$ such that \begin{equation}\label{dem}\left\Vert 1_A\right\Vert \leq D\left\Vert 1_B\right\Vert\end{equation} for all $A,B\subset\mathbb{N}$ finite and the same cardinality. The smallest constant appearing in (\ref{dem}) is called the {\it democracy constant} and $\mathcal B$ is said to be a $D$-democratic basis.\newline

\noindent{\bf Theorem KT} \label{vk} (\cite{VT, Tem})
			
{\it (i) If $\mathcal{B}$ is a $C$-greedy basis then $\mathcal{B}$ is $C$-democratic and $C$-suppression unconditional.

(ii) If $\mathcal{B}$ is $K_{su}$- suppression unconditional  and $D$-democratic then $\mathcal{B}$ is $(K_{su}+K_{su}^3D)$-greedy.
		}\newline

Notice that the dependence on the constants is not good enough since $1$-suppression unconditional and $1$-democratic only gives $2$-greedy. To characterize $1$-greedy bases, Albiac and Wojtaszczyk (see \cite{AW}) introduced the so-called Property (A). For each $|S|<\infty$ and $x=\sum_{n\in S} e_n^*(x)e_n\in \X$, we write $n_{max}(x):=\max\{n:n\in S\}$ and $M(x):=\{n\in S: |e_n^*(x)|= \max_m |e^*_m(x)|\}$. A basis is said  {\it to have Property (A)} whenever
\begin{equation} \label{pA} \|x\|=\|\sum_{n\in M(x)} \theta_n(x) e_n^*(x)e_{\pi(n)} + (x-P_{M(x)}x)\|, \end{equation}
for all $\pi: S\to [1,n_{max}(x)]\cap \N$ injective map such that $\pi(j)=j$ if $j\notin M(x)$ and $\theta_n(x)\in \{\pm 1\}$ with $\theta_n(x)=1$ whenever $\pi(n)=n$ for $n\in M(x)$.\newline

\noindent{\bf Theorem AW}\label{Aw} (\cite[Theorem 3.4]{AW})
		{\it Let $\X$ be a Banach space and $\mathcal{B}$ a Schauder basis.	 Then $\mathcal{B}$ is a $1$-greedy basis if and only if $\mathcal B$ is $1$-suppression unconditional and it has Property (A).
}\newline

It has been recently shown by Albiac and Ansorena (see \cite[Theorem 3.1]{AA2}) that the bases with Property (A) coincide with the almost-greedy bases with $C_{ag}=1$ , that is to say \bee \label{ag}\Vert x-\mathcal{G}_m(x)\Vert \leq \underset{\vert A\vert = m}{\inf}\Vert x-P_A(x)\Vert,\; \forall x\in \X,\; \forall m\in\mathbb{N}.\ene
	
Later on Theorem KT and Theorem AW were generalized in \cite{DKOSS} using the so-called Property (A) with constant $C$  (which has been also called  {\it $C$-symmetric for largest coefficients} in \cite{AA2}) where the equality
 (\ref{pA}) is replaced for an inequality
   \begin{equation} \label{pAC} \|x\|\le C \|\sum_{n\in M(x)} \theta_n(x) e_n^*(x)e_{\pi(n)} + (x-P_{M(x)}x)\|.
    \end{equation}

\noindent{\bf Theorem D} \label{DKOSS} (\cite[Theorem 2]{DKOSS})
			
{\it (i) If $\mathcal{B}$ is a $C$-greedy basis then $\mathcal{B}$ is  $C$-suppression unconditional and it has Property (A) with constant $C$.

(ii) If $\mathcal{B}$ is $K_{su}$- suppression unconditional  and Property (A) with constant $C$ then $\mathcal{B}$ is $K_{su}^2C$-greedy.
}\newline

Let us first reformulate Property (A) in terms useful for our purposes (see \cite{DKOSS}).

\begin{lem} \label{lema} Let $\mathcal B$ be a Schauder basis of $\X$. The basis $\mathcal B$ has the Property (A) with constant $C$ if and only if
$$\|  x +1_{\e A} \|\le C \|x+1_{\e' B}  \| $$ for any $\e, \e' \in \{\pm1\}$, $|A|=|B|$, $A\cap B=\emptyset,$ $x\in \X_c$ with  $supp (x)\cap (A\cup B)=\emptyset$ and $\|\tilde x\|_\infty\le 1$.
\end{lem}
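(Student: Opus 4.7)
The plan is to prove the two implications separately.

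For the forward direction, assume Property~(A) with constant $C$ as in \eqref{pAC}. Given $x,A,B,\varepsilon,\varepsilon'$ satisfying the hypotheses of the lemma, set $y=x+1_{\varepsilon A}$. Since $\|\tilde x\|_\infty\le 1$ and $|e_a^{*}(y)|=1$ for $a\in A$, one has $A\subseteq M(y)$; any extra elements of $M(y)$ lie in $supp(x)$ and will serve as fixed points below. I would then invoke \eqref{pAC} with the permutation $\pi$ which is the identity on $supp(y)\setminus A$ and bijects $A$ onto $B$ via a chosen bijection $\pi_0:A\to B$, together with signs $\theta_a=\varepsilon_a\varepsilon'_{\pi_0(a)}$ for $a\in A$ and $\theta_n=1$ on $M(y)\setminus A$; the rearranged vector on the right of \eqref{pAC} is then exactly $x+1_{\varepsilon' B}$. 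The only technical issue is that \eqref{pAC} demands $\pi$ take values in $[1,n_{max}(y)]\cap\N$, which may fail when $\max B>n_{max}(y)$. I would fix this by augmenting with a small coefficient $\eta e_k$, where $0<\eta<1$ and $k>\max(B\cup supp(y))$: then $n_{max}(y+\eta e_k)=k$, $M(y+\eta e_k)=M(y)$, and $k$ is a fixed point of the extended permutation; applying \eqref{pAC} to $y+\eta e_k$ and letting $\eta\to 0^+$ yields the inequality of the lemma.

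For the backward direction, assume the inequality and let $x\in\X_c$ together with an admissible pair $\pi,\theta$ be given. Set $\alpha=\|\tilde x\|_\infty$, write $\delta_m\in\{\pm 1\}$ for the sign of $e_m^{*}(x)$ when $m\in M(x)$, and partition $M(x)=M_0\sqcup M_1$ into the fixed points and non-fixed points of $\pi|_{M(x)}$. Put $M_1'=\pi(M_1)$, so that $|M_1|=|M_1'|$ and $M_0\cap M_1'=\emptyset$ by injectivity of $\pi$. Absorbing the unchanged part into $w=(x-P_{M(x)}x)+\alpha\cdot 1_{\delta|_{M_0}}$ produces
\[
x=w+\alpha\cdot 1_{\delta|_{M_1}},\qquad \tilde x=w+\alpha\cdot 1_{\delta'|_{M_1'}},
\]
where $\delta'_{\pi(m)}=\theta_m\delta_m$. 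One checks $w\in\X_c$, $\|\tilde w\|_\infty\le\alpha$, and $supp(w)\cap(M_1\cup M_1')=\emptyset$. When $M_1\cap M_1'=\emptyset$, the hypothesis applied to $w/\alpha$, $A=M_1$, $B=M_1'$, with signs $\delta|_{M_1}$ and $\delta'|_{M_1'}$, and rescaled by $\alpha$, yields $\|x\|\le C\|\tilde x\|$, which is \eqref{pAC}.

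The main obstacle is the overlap case $M_1\cap M_1'\ne\emptyset$; here $A=M_1$ and $B=M_1'$ violate the disjointness hypothesis of the inequality. The bijection $\pi|_{M_1}\colon M_1\to M_1'$ decomposes into paths (from $M_1\setminus M_1'$ into $M_1'\setminus M_1$) and cycles (wholly inside $M_1\cap M_1'$), and my initial plan is to introduce an auxiliary index set $V$ with $|V|=|M_1|$ disjoint from $supp(w)\cup M_1\cup M_1'$ and to bound $\|x\|$ by passing through $\|w+\alpha\cdot 1_{\delta''|_V}\|$ via two applications of the hypothesis. A blunt chain of this form yields only $\|x\|\le C^{2}\|\tilde x\|$; recovering the sharp constant $C$ is the delicate step, and I would expect it to require handling the path and cycle components of $\pi|_{M_1}$ separately, rearranging which portion of $\alpha\cdot 1_{\delta|_{\cdot}}$ is pushed into $w$ at each step, so that every one-step comparison stays within the disjointness regime of the hypothesis.
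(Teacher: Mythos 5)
Your forward implication is correct and is essentially the paper's argument with one genuine improvement: you notice that the definition of Property (A) only admits permutations $\pi$ taking values in $[1,n_{max}(y)]\cap\N$, which can fail when $\max B>\max(supp(y))$, and you repair this by adjoining $\eta e_k$ with $k$ large and letting $\eta\to 0^+$. The paper simply applies Property (A) to $y=x+1_{\e A}$ with a bijection $\pi:A\to B$ and does not address that point (nor the sign bookkeeping $\theta_a=\e_a\e'_{\pi(a)}$, which you get right), so your treatment is the more careful one. Your backward implication in the case $M_1\cap M_1'=\emptyset$ is exactly the paper's computation: take $A=M_1=\{j\in M(x):\pi(j)\ne j\}$, $B=\pi(A)$, absorb the unchanged part into $w$, and apply the hypothesis once.

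The gap is the one you flag yourself: when $\pi$ has a cycle among the non-fixed points of $M(x)$, the sets $M_1$ and $M_1'=\pi(M_1)$ intersect, the disjointness hypothesis of the lemma is violated, and your detour through an auxiliary set $V$ only yields the constant $C^2$. As submitted the proposal therefore does not establish the stated equivalence with the same constant $C$; the sketch about treating paths and cycles separately is not carried out, and a one-step-at-a-time scheme along a path or cycle accumulates a factor of $C$ per step, so it is not clear it can succeed. You should know, however, that the paper's own proof of this direction is precisely your disjoint-case computation applied verbatim with $A=\{j\in M(x):\pi(j)\ne j\}$ and $B=\pi(A)$, with no verification that $A\cap B=\emptyset$: the overlapping case is simply not addressed there either. (The omission is harmless for the way the lemma is used: for $C=1$, which is what Corollary 1.9 needs, $C^2=C$; and the qualitative equivalences in Corollary \ref{t2} tolerate a loss from $C$ to $C^2$.) So you have isolated a real lacuna rather than missed an idea that the paper supplies; but to claim Lemma \ref{lema} exactly as stated you would still need either to close the overlap case with constant $C$ or to record that the constant may degrade to $C^2$ in that direction.
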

\begin{proof} Assume $\mathcal B$ has Property (A) with constant $C$. For each $\e, \e' \in \{\pm1\}$, $A, B$ and $x$ such that $|A|=|B|$, $A\cap B=\emptyset$ and $\|\tilde x\|_\infty\le 1$ with $supp (x)\cap (A\cup B)=\emptyset$, we write
$y=1_{\e A} + x$. Hence $M(y)=A\cup \{n\in supp(x): |e_n^*(x)|=1\}$. Let $\pi:A\to B$ be a bijection and set $\theta_n(y)= \e'_{\pi(n)}$ for $n\in A$. Hence
$\|y\|\le C\|1_{\e' B} + x\|.$

Conversely given $x\in \X_c$ with $supp(x)= S$ and $\alpha=\max\{|e_n^*(x)|: n\in S\}$  one can consider, for each  $\pi$ and $\theta$ in the conditions above, the set $A=\{j\in M(x): \pi(j)\ne j\}$ and define $\e_n=\frac{e_n^*(x)}{|e_n^*(x)|}$ for each $n\in A$. Now, selecting $B=\pi(A)$ and $\e'_n=\theta_n(x)$ for $n\in B$, we have
\ba\left\Vert x \right\Vert&=& \alpha \left\Vert 1_{\e A}+ \frac{1}{\alpha} (x- P_Ax)\right\Vert\\
&\le& C\alpha \left\Vert 1_{\e'B}+ \frac{1}{\alpha}(x- P_Ax)\right\Vert\\
&=&C\left\Vert\sum_{n\in A} \theta_n(x) e_n^*(x)e_{\pi(n)} + (x-P_Ax)\right\Vert.\ea
\end{proof}
	
We would like to introduce here two properties which encode the notions of unconditionality and democracy or unconditionality and Property (A) at once.
\begin{defn} A Schauder basis $\mathcal B$ is said to have  \textit{Property $(Q)$} with constant $C$ whenever
\begin{equation} \label{qp}\|x+ 1_{ A} \|\le C \|x +y+ 1_{ B}\|\end{equation}
 for any  $|A|=|B|$, $A\cap B=\emptyset$ and $x, y\in \X_c$  such that $x y=0$, $\|\tilde x\|_\infty\le 1$ and $supp (x+y)\cap (A\cup B)=\emptyset.$
 \end{defn}

\begin{rem}\label{n0}
 Clearly  \textit{Property $(Q)$} with constant $C$ on a basis $\mathcal B$   implies that $\mathcal B$ is $C$-democratic  and $C$-suppression unconditional.
 Conversely if  $\mathcal B$ is $D$-democratic  and $C$-suppression unconditional then $\mathcal B$ has Property $(Q)$ with constant $C(1+D)$.
\end{rem}

\begin{defn}
Let $z\in \X_c$. We write $\Gamma_0=\X_c$ and for $z\ne 0$ we define \begin{equation}\label{d2}\Gamma_z=\{y\in \X_c: zy=0, |supp(z)|\le|\{n: |e_n^*(y)|=1\}|\}\end{equation}
	\end{defn}

\begin{defn} A Schauder basis $\mathcal B$ is said to have   \textit{Property $(Q^*)$} with constant $C$ whenever
 \begin{equation} \label{qpstar}\|x+ z \|\le C \|x +y\| \end{equation}
 for any   $x,z,y\in \X_c$ such that $xz=0$, $xy=0$, $\max\{\|\tilde x\|_\infty ,\|\tilde z\|_\infty\}\le 1$ and $y\in \Gamma_z$.
 \end{defn}

\begin{rem}\label{q*}
 It is clear that Property $(Q^*)$ implies that $\mathcal{B}$ is suppression unconditional and satisfies the Property (A) with the same constant.

 Conversely if $\mathcal B$ is $K$-suppression unconditional and it has Property (A) with constant $C$ then $\mathcal B$ has Property $(Q^*)$ with constant $KC$.

 Indeed, let $x,z,y\in \X_c$  such that $xz=0$, $xy=0$, $\max\{\|\tilde x\|_\infty , \|\tilde z\|_\infty\}\le 1$ and  $y\in \Gamma_z$. If $z=0$ we have
 $\|x \|\le K \|x +y\|$ using that the basis is $K$-suppression unconditional. Assume now that $z\neq 0$ with $A=supp(z)$. Select $B\subseteq \{n: |e_n^*(y)|=1\}$ with $|B|=|A|$ and $\e'_n=\frac{e_n^*(y)}{e_n^*(y)}$ for $n\in B$. Therefore
 $$
\|x+ 1_{\e A}\|\le  C\|x+ 1_{\e' B}\|\le CK\|x+ y\|.
 $$

 Notice that $\|\tilde z\|_\infty\le 1$ implies that $z\in co(\{1_{\e A}: |\e_n|=1\})$. Hence $x+ z= \sum_{j=1}^m \lambda_j (x+ 1_{\e^{(j)} A})$ for some $|\e^{(j)}_n|=1$ and $0\le \lambda_j\le 1$ with $\sum_{j=1}^m \lambda_j=1$ and we obtain $\|x+z\|\le CK \|x+y\|$.\qed

\end{rem}

 We shall prove in the paper that the Property $(Q)$ and Property $(Q^*)$ are actually equivalent (see Theorem \ref{t0}).

	 In this paper we also introduce two functionals  depending only on distances to one dimensional subspaces which allow us to   characterize the greedy bases and $1$-greedy bases.

	\begin{defn}	Let $\mathcal B$ be a basis in a Banach space $\X$, $x\in \X$ and  $m\in \N$. We define $$\mathcal{D}_m(x,\mathcal{B}) = \mathcal{D}_m(x) := \inf \lbrace d( x,[1_A] ): \; A\subset\mathbb{N}, \vert A\vert = m\rbrace,$$
and
$$\mathcal{D}^*_m(x,\mathcal{B}) = \mathcal{D}^*_m(x):=\inf \lbrace d( x,[1_{\e A}] ): \; (\e_n)\in \{\pm 1\}, A\subset\mathbb{N}, \vert A\vert = m\rbrace.$$
\end{defn}
In particular $$\mathcal{D}^*_m(x,\mathcal{B}) = \mathcal{D}^*_m(x) := \inf \lbrace \Vert x-\alpha(1_{A_1}-1_{A_2})\Vert : \; \vert A_1\cup A_2\vert = m, A_1\cap A_2=\emptyset, \alpha\in\mathbb{R}\rbrace.$$

Of course $\forall x\in \X$ one has $$\sigma_m(x) \leq \mathcal{D}^*_m(x)\leq \mathcal{D}_m(x)\le \|x\|.$$
 Our aim is to show that greedy bases can be actually defined using the functionals $\mathcal{D}^*_m$ or $\mathcal{D}_m$ instead of $\sigma_m$ and that the use of the Property $(Q^*)$ allows to improve the dependence of the constants.\newline

Our main results establish (see Proposition \ref{p0} and Theorem \ref{t1}) that the conditions
$$\Vert x-\mathcal{G}_m(x)\Vert \leq C\mathcal{D}_m(x),\; \forall x\in \X,\; \forall m\in \mathbb{N},$$
$$\Vert x-\mathcal{G}_m(x)\Vert \leq C\mathcal{D}^*_m(x),\; \forall x\in \X,\; \forall m\in \mathbb{N},$$
implies the Property $(Q)$ and Property $(Q^*)$ with constant $C$ respectively and also that bases having Property $(Q^*)$ with constant $C$ are $C^2$-greedy bases. This improves the constants in Theorem D and recover Theorem AW. Combining the results above one gets the following chain of equivalent formulations of greedy bases.
	\begin{cor}\label{t2}
		Let $\X$ be a Banach space and $\mathcal{B}$ a Schauder basis of $\,\X$. The following statements are equivalent:

(i) $\mathcal{B}$ is greedy.

(ii) There exists an absolute constant $C>0$ such that $$\Vert x-\mathcal{G}_m(x)\Vert \leq C\mathcal{D}^*_m(x),\; \forall x\in \X,\; \forall m\in \mathbb{N}.$$

(iii) There exists an absolute constant $C>0$ such that
    $$\Vert x-\mathcal{G}_m(x)\Vert \leq C\mathcal{D}_m(x),\; \forall x\in \X,\; \forall m\in \mathbb{N}.$$

 (iv) $\mathcal{B}$ satisfies the $Q$-property.

(v) $\mathcal{B}$ satisfies the $Q^*$-property.

 (vi) $\mathcal{B}$ is unconditional and democratic.

\end{cor}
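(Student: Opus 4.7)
The plan is to close the circular chain of implications
$$\text{(i)}\Rightarrow\text{(ii)}\Rightarrow\text{(iii)}\Rightarrow\text{(iv)}\Leftrightarrow\text{(v)}\Rightarrow\text{(vi)}\Rightarrow\text{(i)},$$
with every arrow supplied by a result already available in the excerpt, so the corollary reduces to bookkeeping rather than fresh mathematics.

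The first block (i)$\Rightarrow$(ii)$\Rightarrow$(iii) is immediate from the trivial inequalities $\sigma_m(x)\le\mathcal{D}^*_m(x)\le\mathcal{D}_m(x)$ recorded right after the introduction of the two new functionals: a $C$-greedy basis satisfies $\|x-\mathcal{G}_m(x)\|\le C\sigma_m(x)$, and the bound on the right inflates to $C\mathcal{D}^*_m(x)$ and then to $C\mathcal{D}_m(x)$ with no loss of constant.

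The central block (iii)$\Rightarrow$(iv)$\Leftrightarrow$(v) is exactly the content of the paper's main results. Proposition \ref{p0}, flagged in the introduction as one of the two principal contributions, says that control of the greedy remainder by $\mathcal{D}_m$ forces Property $(Q)$, giving (iii)$\Rightarrow$(iv); Theorem \ref{t0} supplies the two-sided equivalence (iv)$\Leftrightarrow$(v). No additional argument is needed here.

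It remains to reach (vi) and close the loop back to (i). From (v), Theorem \ref{t0} yields (iv), and the first half of Remark \ref{n0} then reads off from Property $(Q)$ that the basis is democratic and suppression unconditional, giving (vi). The loop is then closed by the Konyagin--Temlyakov implication in Theorem KT(ii), which recovers (i) from (vi). If one wishes to preserve the sharper constant $C^{2}$ advertised in the introduction instead of the cruder $K_{su}+K_{su}^{3}D$ produced by Theorem KT, one can replace this last arrow by the direct (v)$\Rightarrow$(i) inside Theorem \ref{t1}, which promises a $C^{2}$-greedy bound from Property $(Q^{*})$ with constant $C$. Because every arrow is already justified elsewhere, no step poses a genuine obstacle; the only subtlety is choosing the route (via Theorem KT or via Theorem \ref{t1}) that matches the quantitative claim one wishes to emphasize.
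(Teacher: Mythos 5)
Your proposal is correct and is essentially the paper's own argument: the corollary is obtained by combining Proposition \ref{p0} (which already gives (i)$\Leftrightarrow$(iii)$\Leftrightarrow$(iv)), Theorem \ref{t1} (for the $\mathcal{D}^*_m$ statement and the $C^2$-greedy bound from Property $(Q^*)$), Theorem \ref{t0} ((iv)$\Leftrightarrow$(v)), Remark \ref{n0} ((iv)$\Rightarrow$(vi)), and the trivial inequalities $\sigma_m\le\mathcal{D}^*_m\le\mathcal{D}_m$. Arranging these as a single cycle, with either Theorem KT(ii) or Theorem \ref{t1}(ii) closing the loop, is exactly the intended bookkeeping.
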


 \begin{cor} Let $\mathcal{B}$  be a Schauder basis of $\X$. Then $\mathcal{B}$ is $1$-greedy if only if $\mathcal{B}$ satisfies the $Q^*$-property with constant $1$ if and only  if $\mathcal B$ is $1$-unconditional and it has Property (A) with constant $C=1$.
  \end{cor}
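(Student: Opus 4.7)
The plan is to obtain this corollary as the sharp $C=1$ specialization of the three-way equivalence already assembled in Corollary~\ref{t2}, checking that at each step the constants do not inflate. Write (a) for $1$-greedy, (b) for Property $(Q^*)$ with constant $1$, and (c) for $1$-suppression unconditional together with Property (A) with constant $1$.

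For (b)$\Leftrightarrow$(c), I would simply invoke Remark~\ref{q*}. The forward direction is explicit there: Property $(Q^*)$ with constant $C$ already forces both $C$-suppression unconditionality and Property (A) with constant $C$, so at $C=1$ we recover (c). For the reverse, the argument in Remark~\ref{q*} shows that $K$-suppression unconditional plus Property (A) with constant $C$ yields Property $(Q^*)$ with constant $KC$; specializing $K=C=1$ gives (b) with no loss.

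For (a)$\Rightarrow$(b), I would use the monotonicity $\sigma_m(x)\le \mathcal{D}^*_m(x)$ noted right after the definition of $\mathcal{D}^*_m$. A $1$-greedy basis satisfies $\|x-\mathcal{G}_m(x)\|\le \sigma_m(x)\le \mathcal{D}^*_m(x)$ for every $x$ and $m$, and then Proposition~\ref{p0}/Theorem~\ref{t1}, which establishes that the bound $\|x-\mathcal{G}_m(x)\|\le C\mathcal{D}^*_m(x)$ implies Property $(Q^*)$ with constant $C$, delivers (b) with constant exactly $1$. For (b)$\Rightarrow$(a), I would quote the quantitative estimate announced in the introduction: Property $(Q^*)$ with constant $C$ produces a $C^2$-greedy basis, so at $C=1$ we conclude that $\mathcal{B}$ is $1$-greedy.

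The main obstacle is purely bookkeeping: one must check that the implications used from the cited results are genuinely linear (with factor $1$) in the relevant constants, so that $C=1$ on the hypothesis side produces $C=1$ on the conclusion side. The formulation of Remark~\ref{q*} (with the multiplicative factor $KC$) and of the characterization via $\mathcal{D}^*_m$ (with constant $C$ on both sides) makes this transparent, so no additional argument beyond the three results already in the paper is required.
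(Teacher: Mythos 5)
Your proof is correct and follows the same route the paper intends: the corollary is obtained by combining the two parts of Theorem~\ref{t1} (using $\sigma_m(x)\le \mathcal{D}^*_m(x)$ for the forward direction and $C^2=1$ for the converse) with the two directions of Remark~\ref{q*} at $K=C=1$. The only point worth flagging is that ``$1$-unconditional'' in the statement must be read as ``$1$-suppression unconditional,'' which is exactly what Remark~\ref{q*} and Theorem~AW supply.
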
	
Our proofs will follow closely the ideas in \cite{AA2,  AW, DKOSS, VT}.

	\end{section}

	
\begin{section}{Some properties of the new functionals $\mathcal{D}_m$ and $\mathcal{D}^*_m$}
Of course
$\mathcal{D}_1(x)=\mathcal{D}^*_1(x)= \|x-\mathcal G_1(x)\|=\|x- e^*_{\rho(1)}(x)e_{\rho(1)}\|.$
However calculating the functionals $\mathcal{D}_m(\cdot)$ and $\mathcal{D}^*_m(\cdot)$ for $m\ge 2$ is not easy in general. Let us study the situation for $\X=\ell^p$ and concrete elements $x\in \X$.
We shall use the following elementary lemmas.

\begin{lem} \label{basiclema} Let $1<p<\infty$ and $m, N\in \N$ such that $m\ge N$. Define, for $\alpha\in \R$ and $ 1\le k\le N$, $$H(\alpha, k)=|1-\alpha|^pk+|\alpha|^p(m-k)+ (N-k)$$
and, for $\alpha\in \R$, $k_1,k_2\in \N$ and $ 1\le k_1+k_2\le N$,
$$L(\alpha, k_1,k_2)=|1-\alpha|^pk_1+|1+\alpha|^pk_2+|\alpha|^p(m-(k_1+k_2))+ (N-(k_1+k_2)).$$
 Then
$$\min_{\alpha\in \R, 1\le k\le N} H(\alpha,k)=\min_{\alpha\in \R, 1\le k_1+k_2\le N} L(\alpha,k_1,k_2)= N\left(1+ \left(\frac{m-N}{N}\right)^{-1/(p-1)}\right)^{-(p-1)}.$$
\end{lem}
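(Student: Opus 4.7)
The plan is to reduce both infima to a common one-dimensional optimization, and then to exploit an envelope-theorem computation to pin down the minimizer.

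First I would reduce the signed problem to the unsigned one. Since $|1+\alpha|^p\ge |1-\alpha|^p$ for $\alpha\ge 0$, one immediately has $L(\alpha, k_1, k_2)\ge H(\alpha,k_1+k_2)$ for $\alpha\ge 0$. The symmetry $L(\alpha,k_1,k_2)=L(-\alpha,k_2,k_1)$ then gives the analogous bound with $|\alpha|$ for $\alpha<0$. Since conversely $H(\alpha,k)=L(\alpha,k,0)$, the two infima coincide. Thus it suffices to evaluate $\min_{\alpha\in\R,\,1\le k\le N} H(\alpha,k)$.

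Next, for each fixed $k\in[1,N]$, the function $\alpha\mapsto k|1-\alpha|^p+(m-k)|\alpha|^p$ is strictly convex and its minimum is attained at the unique critical point
\[
\alpha_k=\Bigl[1+\bigl(\tfrac{m-k}{k}\bigr)^{1/(p-1)}\Bigr]^{-1}\in(0,1),
\]
and a short calculation yields
\[
f(k):=k(1-\alpha_k)^p+(m-k)\alpha_k^p=\frac{k(m-k)}{\bigl(k^{1/(p-1)}+(m-k)^{1/(p-1)}\bigr)^{p-1}}.
\]
Setting $g(k)=f(k)+(N-k)$, the problem is thus to minimize $g$ over $k\in[1,N]$.

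The key step is to show $g$ is non-increasing, so that the minimum is at $k=N$. Here I would invoke the envelope theorem: since $F(\alpha,k)=k|1-\alpha|^p+(m-k)|\alpha|^p$ is smooth with a smooth minimizer,
\[
f'(k)=\partial_k F(\alpha_k,k)=(1-\alpha_k)^p-\alpha_k^p.
\]
Because $\alpha_k\in(0,1)$, we have $(1-\alpha_k)^p\le 1$ and $\alpha_k^p\ge 0$, so $f'(k)\le 1$ and hence $g'(k)\le 0$. Therefore $g$ attains its minimum at $k=N$, where $g(N)=f(N)$. A final algebraic manipulation (pulling a factor of $(m-N)^{1/(p-1)}$ out of the denominator) converts
\[
f(N)=\frac{N(m-N)}{\bigl(N^{1/(p-1)}+(m-N)^{1/(p-1)}\bigr)^{p-1}}
\]
into the stated form $N\bigl(1+((m-N)/N)^{-1/(p-1)}\bigr)^{-(p-1)}$.

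The main obstacle is the monotonicity step: without the envelope identity one would have to differentiate the explicit formula for $f$ and compare with $1$, which is an unpleasant computation. The only minor technicality is the degenerate case $m=N$: then $\alpha_N=1$ and $(m-N)^{-1/(p-1)}=+\infty$ are to be interpreted as limits, and both sides vanish consistently.
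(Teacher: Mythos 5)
Your proof is correct, but it runs the two-variable minimization in the opposite order from the paper, and this is where the approaches genuinely differ. You fix $k$ first, solve the convex one-variable problem in $\alpha$ explicitly to get $f(k)=k(m-k)\bigl(k^{1/(p-1)}+(m-k)^{1/(p-1)}\bigr)^{-(p-1)}$, and then need the envelope identity $f'(k)=(1-\alpha_k)^p-\alpha_k^p\le 1$ to see that $f(k)+(N-k)$ is non-increasing. The paper instead fixes $\alpha\in[0,1]$ first and observes that $H_\alpha(k)=\bigl((1-\alpha)^p-\alpha^p-1\bigr)k+N+\alpha^pm$ is \emph{affine} in $k$ with non-positive slope (and $L_\alpha$ is affine in $(k_1,k_2)$ with non-positive slope in $k_1$ and non-negative slope in $k_2$, since $(1+\alpha)^p\ge \alpha^p+1$), so both minima collapse at once to $(1-\alpha)^pN+\alpha^p(m-N)$, leaving only the single calculus problem you solve at $k=N$. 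The obstacle you flag -- that monotonicity of $f(k)-k$ would otherwise require an unpleasant differentiation of the explicit formula -- simply dissolves in the paper's order of minimization: the slope bound $(1-\alpha)^p-\alpha^p\le 1$ that you extract via the envelope theorem is exactly the coefficient of $k$ in $H_\alpha$, read off with no smoothness considerations. Your envelope step is legitimate (the minimizer $\alpha_k$ lies in the open interval $(0,1)$ for $k<m$, where $F$ is smooth and strictly convex in $\alpha$), and your reduction of $L$ to $H$ via $L(\alpha,k_1,k_2)\ge H(\alpha,k_1+k_2)$ for $\alpha\ge 0$ together with $L(\alpha,k_1,k_2)=L(-\alpha,k_2,k_1)$ and $H(\alpha,k)=L(\alpha,k,0)$ is a clean alternative to the paper's simultaneous treatment; your handling of the degenerate case $m=N$ as a limit is also consistent with the stated formula. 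What the paper's route buys is brevity and the avoidance of any differentiability discussion; what yours buys is the explicit value of $\mathcal{D}_k$-type quantities for every intermediate $k$, not just $k=N$.
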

\begin{proof}  Using that $H(\alpha, k)\ge H(|\alpha|,k)$ and $L(\alpha,k_1,k_2)= L(-\alpha,k_2,k_1)$ we can restrict to consider $\alpha\in \R^+$. Also since $(\alpha-1)^p k+ \alpha^p (m-k)$  and $(\alpha-1)^p k_1+ (1+\alpha)^p k_2 + \alpha^{p}k_3$ are increasing for $\alpha\ge 1$, the minima are achieved over $0\le \alpha\le 1$.

Let $0\le\alpha\le 1$ and  $0\le k, k_1,k_2\le N$ and $k_1+k_2 \le N$, we write $H(\alpha,k)=H_\alpha(k)= J_k(\alpha)$ that is
$$H_\alpha(k)=\Big((1-\alpha)^p-\alpha^p-1\Big)k+ N+\alpha^pm,$$
Similarly we write $L(\alpha, k_1,k_2)=L_\alpha(k_1,k_2)$ that is $$L_\alpha(k_1,k_2)=\Big((1-\alpha)^p-\alpha^p-1\Big)k_1+ \Big((1+\alpha)^p-\alpha^p-1\Big)k_2+N+\alpha^pm.$$
Since $(1-\alpha)^p\le \alpha^p+1$ and $(1+\alpha)^p\ge \alpha^p+1$ we obtain that
$$\min \{L_\alpha(k_1,k_2):0\le k_1+k_2\le N\}=\min \{H_\alpha(k): 0\le k\le N\}=(1-\alpha)^pN+\alpha^p(m-N).$$

Now the $\min_{0\le\alpha\le 1}J_N(\alpha)$ is achieved at $\alpha_{min}=(1+ (\frac{m-N}{N})^{\frac{1}{p-1}})^{-1}$ and
$$J_N(\alpha_{min})= N(1+ (\frac{m-N}{N})^{-\frac{1}{p-1}})^{-(p-1)}.$$
\end{proof}

\begin{prop} \label{p1} Let  $\X=\ell^p$ for some  $1<  p<\infty$ and $\mathcal B$ the canonical basis. If $B\subset \N$ and $|B|=N$ then
\bee \label{e1}
\mathcal{D}_m(1_B)=\mathcal{D}^*_m(1_B) =(N-m)^{1/p},\;\; m\leq N,
\ene
\bee \label{e2}
\mathcal{D}_m(1_B)=\mathcal{D}^*_m(1_B) = N^{1/p}\left(1+ \left(\frac{m}{N}-1\right)^{-1/(p-1)}\right)^{-1/p'} ,\;\; m\geq N,
\ene
where $p'=\frac{p}{p-1}$.
\end{prop}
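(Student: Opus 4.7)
The plan is to parametrize both infima by the overlap of the candidate set with $B$, reduce them to the functionals $H$ and $L$ of Lemma \ref{basiclema}, and then split into the two regimes $m\le N$ and $m\ge N$.

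First I would unpack the two infima coordinatewise. For $|A|=m$ and $\alpha\in\R$, writing $k=|A\cap B|$ and splitting the coordinates of $1_B-\alpha 1_A$ into those in $A\cap B$, in $A\setminus B$ and in $B\setminus A$, one obtains $\|1_B-\alpha 1_A\|_p^p = k|1-\alpha|^p+(m-k)|\alpha|^p+(N-k) = H(\alpha,k)$. The analogous decomposition for disjoint $A_1,A_2$ with $|A_1\cup A_2|=m$ and $k_i=|A_i\cap B|$ yields $\|1_B-\alpha(1_{A_1}-1_{A_2})\|_p^p = L(\alpha,k_1,k_2)$. Since $|B|=N$ and $A_1\cap A_2=\emptyset$, one always has $0\le k, k_1+k_2\le\min(m,N)$, so computing $\mathcal{D}_m(1_B)^p$ and $\mathcal{D}_m^*(1_B)^p$ amounts to minimizing $H$ and $L$ over these parameter ranges.

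For the regime $m\le N$, I would first get the upper bound $\mathcal{D}_m(1_B)\le(N-m)^{1/p}$ by taking any $A\subseteq B$ with $|A|=m$ and $\alpha=1$; the same bound transfers to $\mathcal{D}_m^*$ since this element also fits the form $\alpha(1_{A_1}-1_{A_2})$ with $A_2=\emptyset$. For the matching lower bound on $\mathcal{D}_m^*$, the key observation is that on the set $S=B\setminus(A_1\cup A_2)$ the vector $1_B-\alpha(1_{A_1}-1_{A_2})$ is identically equal to $1$, so restricting the $\ell^p$ norm to $S$ gives $\|1_B-\alpha(1_{A_1}-1_{A_2})\|_p^p\ge|S|=N-k_1-k_2\ge N-m$. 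Combined with $\mathcal{D}_m^*\le\mathcal{D}_m$, this establishes \eqref{e1} together with the equality $\mathcal{D}_m=\mathcal{D}_m^*$.

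For the regime $m\ge N$, Lemma \ref{basiclema} does the main work: it identifies the minima of $H$ over $1\le k\le N$ and of $L$ over $1\le k_1+k_2\le N$ as the common value $N(1+((m-N)/N)^{-1/(p-1)})^{-(p-1)}$. To finish I would only need to rule out the excluded boundary case $k=0$ (resp.\ $k_1+k_2=0$): there one gets $H(\alpha,0)=L(\alpha,0,0)=m|\alpha|^p+N\ge N$, which is no smaller than the value supplied by the lemma, since the factor $(1+(\cdots))^{-(p-1)}$ is at most $1$. Taking $p$-th roots and using $(p-1)/p=1/p'$ then produces \eqref{e2} and, again, the identity $\mathcal{D}_m=\mathcal{D}_m^*$. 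The main technical content is already packaged in Lemma \ref{basiclema}, so I anticipate no substantive obstacle; the only points demanding care are confirming the correct range of parameters and checking that the $k=0$ boundary does not beat the interior optimum identified by the lemma.
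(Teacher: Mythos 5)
Your proposal is correct and follows essentially the same route as the paper: the same coordinatewise decomposition of $\|1_B-\alpha 1_{\e A}\|_p^p$ into the pieces over $A_1\cap B$, $A_2\cap B$, $A\setminus B$ and $B\setminus A$, the same restriction-to-$B\setminus A$ lower bound for $m\le N$, and the same appeal to Lemma \ref{basiclema} for $m\ge N$. Your explicit check that the boundary case $k=k_1+k_2=0$ (value $\ge N$) cannot beat the interior optimum is a small point the paper leaves implicit, but it does not change the argument.
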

\begin{proof}
  Assume first that $m\leq N$. Let  $\alpha\in \R$, $|\e_n|=1$ and $A\subset \N$ with $|A|=m$. Set $1_{\e A}=1_{A_1}-1_{A_2}$. Observe that
\bee \label{basic}\|1_B- \alpha 1_{\e A}\|^p=|1-\alpha|^p\|1_{A_1\cap B}\|^p+|1+\alpha|^p\|1_{A_2\cap B}\|^p+ |\alpha|^p\|1_{A\setminus B}\|^p+ \|1_{B\setminus A}\|^p.\ene
In particular \bee \label{basic2}\|1_B- \alpha 1_{ A}\|^p=|1-\alpha|^p\|1_{A\cap B}\|^p+ |\alpha|^p\|1_{A\setminus B}\|^p+ \|1_{B\setminus A}\|^p.\ene

Therefore $\|1_B- \alpha1_{\e A}\|\ge \|1_{B\setminus A}\|\ge (N-m)^{1/p}$. This gives $\mathcal{D}^*_m(1_B)\ge (N-m)^{1/p}$.

On the other hand, choosing  $A\subseteq B$ and $\alpha=1$ one concludes that $(N-m)^{1/p}=\|1_B-1_A\|\ge \mathcal{D}_m(1_B).$ Therefore we obtain (\ref{e1}).

Assume now that $m\ge N$. Denoting $k= |A\cap B|=\|1_{A\cap B}\|^p$, $k_1= |A_1\cap B|=\|1_{A_1\cap B}\|^p$ and $k_2= |A_1\cap B|=\|1_{A_2\cap B}\|^p$,  we can apply (\ref{basic}) and (\ref{basic2}) together with Lemma \ref{basiclema} to obtain (\ref{e2}).
\end{proof}

\begin{rem} Similar arguments show that for $\X=\ell^1$ and $\mathcal B$ the canonical basis and $B\subset \N$ with $|B|=N$ one has
$\mathcal{D}_m(1_B)=\left\{
  \begin{array}{ll}
     N-m, & \hbox{$m\le N$;} \\
    m-N, & \hbox{$N\le m\le 2N$;} \\
     N, & \hbox{$m\ge 2N$.}
  \end{array}
\right.$
\end{rem}
			For Hilbert spaces and  for orthonormal bases one can compute the functionals explicitely using the inner product.
			\begin{prop} \label{p1}
				Let $\mathbb{H}$ be a Hilbert space and $\mathcal{B}=(e_n)_n$ be an orthonormal basis of $\mathbb{H}$. Then, for $x\in\mathbb{H}$, $$\mathcal{D}_m(x) = \sqrt{\Vert x\Vert^2 - \dfrac{1}{m}\sup\left\lbrace \la x, 1_{A}\ra^2 : \vert A\vert = m \right\rbrace},$$
				$$\mathcal{D}^*_m(x) = \sqrt{\Vert x\Vert^2 - \dfrac{1}{m}\sup\left\lbrace \la x, 1_{\e A}\ra^2 : \vert A\vert = m , (\e_n)\in\{\pm 1\}\right\rbrace}.$$
			\end{prop}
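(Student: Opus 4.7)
The plan is to apply the standard Hilbert space projection formula to each one-dimensional subspace $[1_A]$ (respectively $[1_{\e A}]$) and then take the infimum over the choice of $A$ (respectively over $A$ and signs $\e$).

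First I would fix a finite set $A\subset \N$ with $|A|=m$ and a choice of signs $(\e_n)\in\{\pm1\}$. Since $[1_{\e A}]$ is one-dimensional, every element of it is of the form $\alpha 1_{\e A}$ with $\alpha\in \R$, so
\[
 d(x,[1_{\e A}])^2=\min_{\alpha\in\R}\|x-\alpha 1_{\e A}\|^2.
\]
Expanding $\|x-\alpha 1_{\e A}\|^2=\|x\|^2-2\alpha\langle x,1_{\e A}\rangle+\alpha^2\|1_{\e A}\|^2$ and using that $\mathcal B$ is orthonormal, so $\|1_{\e A}\|^2=m$, the minimum in $\alpha$ is achieved at $\alpha^*=\langle x,1_{\e A}\rangle/m$ and equals
\[
 d(x,[1_{\e A}])^2=\|x\|^2-\frac{\langle x,1_{\e A}\rangle^2}{m}.
\]
The same computation with $1_A$ in place of $1_{\e A}$ gives the analogous formula for $d(x,[1_A])^2$.

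Taking the infimum of $d(x,[1_{\e A}])^2$ over all admissible $A$ and $\e$ turns the subtracted term into a supremum:
\[
 \mathcal D^*_m(x)^2=\inf_{A,\e}\Bigl(\|x\|^2-\tfrac{1}{m}\langle x,1_{\e A}\rangle^2\Bigr)=\|x\|^2-\tfrac{1}{m}\sup\bigl\{\langle x,1_{\e A}\rangle^2:|A|=m,\ (\e_n)\in\{\pm1\}\bigr\},
\]
and likewise for $\mathcal D_m(x)^2$ with $1_A$ in place of $1_{\e A}$. Taking square roots yields both identities. There is no serious obstacle here; the only point one has to be careful with is that the suprema are finite (which follows from $\langle x,1_{\e A}\rangle^2\le \|x\|^2\,m$ by Cauchy--Schwarz, ensuring the quantity under the square root is nonnegative), and that the infimum over the one-dimensional subspaces really reduces to an infimum over scalar multiples, which holds by definition of $[1_{\e A}]$ and $[1_A]$.
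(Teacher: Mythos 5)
Your proof is correct and follows essentially the same route as the paper: expand $\Vert x-\alpha 1_{\e A}\Vert^2$, minimize over $\alpha$ using orthonormality (so $\Vert 1_{\e A}\Vert^2=m$), and then take the infimum over $A$ and $\e$. The only difference is that you additionally note the nonnegativity of the quantity under the square root via Cauchy--Schwarz, which the paper leaves implicit.
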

			
			\begin{proof}
				Let $\alpha \in \mathbb{R}$, $(\e_n)\in\{\pm 1\}$ and $\vert A\vert = m$. Then
				\begin{eqnarray*}
					\Vert x-\alpha 1_{\e A}\Vert^2 &=&\Vert x\Vert^2 -2\langle x,\alpha 1_{\e A}\rangle +  \alpha^2\vert A\vert.
				\end{eqnarray*}
				Therefore the minimum of $\Vert x-\alpha 1_{\e A}\Vert^2$ is achieved at $\alpha_0 = \dfrac{\sum_{k\in A}\e_ke_k^{*}(x)}{n}$ and its value is $\Vert x\Vert ^2 - \dfrac{\left(\langle x,\alpha 1_{\e A}\rangle\right)^2}{n}$.
Taking infimum over the corresponding families we obtain the result.
			\end{proof}

Let us point out that  Proposition \ref{p1} gives
$$\lim_{m\longrightarrow\infty}\mathcal{D}_m(1_B)=\lim_{m\longrightarrow\infty}\mathcal{D}^*_m(1_B)= \|1_B\|$$
for any finite set $B$ for  $\X=\ell^p$ and the canonical basis $\mathcal B$.
In fact this  holds true also for any vector $x$ in Hilbert spaces an any orthonormal basis $\mathcal B$.

	\begin{thm}\label{t1}
		If $\mathbb{H}$ is a Hilbert space and $\mathcal{B}=(e_n)_n$ is an orthonormal basis of $\mathbb{H}$, then $$\lim_{m\longrightarrow\infty}\mathcal{D}_m(x)=\lim_{m\longrightarrow\infty}\mathcal{D}^*_m(x) = \Vert x\Vert,\; \forall x\in \mathbb{H}.$$
	\end{thm}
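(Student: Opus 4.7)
The plan is to reduce to computing one quantity and then to a standard $\ell^2$-tail estimate. Since $\mathcal{D}^*_m(x)\le \mathcal{D}_m(x)\le \|x\|$, it suffices to prove $\mathcal{D}^*_m(x)\to\|x\|$; convergence of $\mathcal{D}_m(x)$ follows by the sandwich. By the formula from Proposition \ref{p1}, this is equivalent to
$$\lim_{m\to\infty}\frac{1}{m}\sup\bigl\{\langle x,1_{\varepsilon A}\rangle^{2}:\ |A|=m,\ (\varepsilon_n)\in\{\pm1\}\bigr\}=0.$$

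Writing $a_n=e_n^{*}(x)$, orthonormality of $\mathcal B$ gives $\sum_n a_n^{2}=\|x\|^{2}<\infty$. For fixed $A$ with $|A|=m$, one has $\langle x,1_{\varepsilon A}\rangle=\sum_{n\in A}\varepsilon_n a_n$, whose supremum over signs equals $\sum_{n\in A}|a_n|$; then the supremum over $A$ with $|A|=m$ equals $S_m:=\sum_{n=1}^{m}a_n^{*}$, where $(a_n^{*})_{n\ge1}$ is the non-increasing rearrangement of $(|a_n|)_{n\ge1}$. Thus the goal reduces to showing that $S_m/\sqrt{m}\to0$.

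A plain application of Cauchy--Schwarz only gives $S_m^{2}/m\le\|x\|^{2}$, which is not enough. The trick is to separate a finite prefix from the $\ell^{2}$-tail. Given $\eta>0$, choose $N$ so that $\sum_{n>N}(a_n^{*})^{2}<\eta^{2}$ (possible since $(a_n^{*})\in\ell^{2}$). For $m>N$ split
$$S_m=\sum_{n=1}^{N}a_n^{*}+\sum_{n=N+1}^{m}a_n^{*}\le T_N+\sqrt{m-N}\,\Bigl(\sum_{n=N+1}^{m}(a_n^{*})^{2}\Bigr)^{1/2}\le T_N+\eta\sqrt{m},$$
where $T_N:=\sum_{n=1}^{N}a_n^{*}$ is a finite constant depending only on $N$ and where the middle inequality is Cauchy--Schwarz on the tail block. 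Dividing by $\sqrt m$ and letting $m\to\infty$ yields $\limsup_m S_m/\sqrt{m}\le\eta$, and since $\eta$ was arbitrary, $S_m/\sqrt{m}\to0$, as required.

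The only even mildly delicate point is realizing that the naive Cauchy--Schwarz bound is too weak, so one must use that $\sum(a_n^{*})^{2}<\infty$ not just to bound but to make the tail small; everything else is bookkeeping. No extra hypothesis on $x$ beyond $x\in\mathbb{H}$ is needed.
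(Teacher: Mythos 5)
Your proof is correct. You and the paper share the same first two moves: reduce to $\mathcal{D}^*_m$ via the sandwich $\mathcal{D}^*_m(x)\le\mathcal{D}_m(x)\le\|x\|$, and invoke the Hilbert-space formula of Proposition \ref{p1} to turn the problem into showing $\frac{1}{m}\sup\{\langle x,1_{\e A}\rangle^2\}\to 0$. From there the routes diverge. The paper first treats $x$ with finite support $B$, $|B|=N$, where the crude bound $\langle x,1_{\e A}\rangle^2\le \|x\|^2|A\cap B|\le N\|x\|^2$ already gives $\|x\|\sqrt{1-N/m}\le\mathcal{D}^*_m(x)$, and then passes to general $x$ by density of $\X_c$ together with the $1$-Lipschitz estimate $\mathcal{D}^*_m(x)\ge\mathcal{D}^*_m(y)-\|x-y\|$. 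You instead identify the supremum explicitly as $S_m^2$, with $S_m$ the sum of the $m$ largest $|e_n^*(x)|$, and prove $S_m/\sqrt{m}\to 0$ directly by a head/tail split of the non-increasing rearrangement, using Cauchy--Schwarz only on the small $\ell^2$-tail. The two arguments encode the same phenomenon (only finitely many coefficients are large, and the remainder has small $\ell^2$ mass), but yours is a single self-contained computation that never needs the approximation step or the Lipschitz property of $\mathcal{D}^*_m$, and as a by-product it gives the exact value of the supremum, namely $\mathcal{D}^*_m(x)=\sqrt{\|x\|^2-S_m^2/m}$; the paper's version is slightly softer but generalizes more readily to situations where the supremum cannot be computed in closed form. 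Your correct observation that the naive global Cauchy--Schwarz bound $S_m^2/m\le\|x\|^2$ is insufficient is exactly the point the paper's finite-support reduction is designed to circumvent.
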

\begin{proof}
				Since $\mathcal{D}^*_m(x)\le \mathcal{D}_m(x) \le \|x\|$, it suffices to see that $\lim_{m\to\infty} \mathcal{D}^*_m(x)=\|x\|$.
				Assume first that $x\in \X_c$ and $supp(x)= B$ with $N=\vert B\vert$. Since, for each $(\e_n)\in \{\pm 1\}$ and $A$ such that $\vert A\vert = m$, we have
				\begin{eqnarray*}
					\dfrac{1}{\vert A\vert}\la x, 1_{\e A}\ra^2 = \dfrac{1}{\vert A\vert}\left(\sum_{k\in A\cap B}\e_ke_k^{*}(x)\right)^2 \leq \Vert x\Vert^2\dfrac{\vert A\cap B\vert}{\vert A\vert}\leq \dfrac{N\Vert x\Vert^2}{m}.
				\end{eqnarray*}
				From Proposition \ref{p1} we conclude that
				\begin{eqnarray*}
					\Vert x\Vert \sqrt{1-N/m}\leq \mathcal{D}^*_m(x)\leq \Vert x\Vert,
				\end{eqnarray*}
				which gives the result for $x\in \X_c$.
				
				For general $x\in \X$, given $\varepsilon>0$, take first $y\in \X_c$ with  $\Vert x-y\Vert < \varepsilon/2$ and observe that $$\mathcal{D}^*_m(x)\geq \mathcal{D}^*_m(y)-\Vert x-y\Vert,$$
				to conclude that $$\lim_{m}\mathcal{D}^*_m(x)\geq \Vert y\Vert -\varepsilon/2 \geq \Vert x\Vert -\varepsilon.$$
				Taking limit as $\varepsilon$ goes to 0, we obtain the result.\end{proof}
				
			\end{section}
			
		
\begin{section}{Bases with  Property $(Q)$ and $(Q^*)$}

\begin{prop} \label{p0} Let $\X$ be a Banach space and $\mathcal{B}$ a Schauder basis of $\X$. The following statements are equivalent:

(i)There exists  $C>0$ such that
    $$\Vert x-\mathcal{G}_m(x)\Vert \leq C\mathcal{D}_m(x),\; \forall x\in \X,\; \forall m\in \mathbb{N}.$$

  (ii)  $\mathcal B$ has Property $(Q)$.

  (iii) $\mathcal B$ is a greedy basis.
 \end{prop}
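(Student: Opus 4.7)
The plan is a cyclic proof of the three implications (iii) $\Rightarrow$ (i) $\Rightarrow$ (ii) $\Rightarrow$ (iii), of which only the middle demands substantial work.

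For (iii) $\Rightarrow$ (i): since $\beta 1_D$ belongs to $[e_n,\,n\in D]$ for every scalar $\beta$, the inequality $\sigma_m(x) \le \mathcal{D}_m(x)$ is immediate, so any $C$-greedy basis trivially satisfies (i) with the same constant $C$. For (ii) $\Rightarrow$ (iii), I will invoke Remark~\ref{n0}, which produces $C$-suppression unconditionality and $C$-democracy from Property $(Q)$ with constant $C$, and then Theorem~KT(ii) bounds the greedy constant by $C + C^4$.

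For the main implication (i) $\Rightarrow$ (ii), the plan is to extract from (i) both democracy and suppression unconditionality (with explicit constants in terms of $C$) and then apply Remark~\ref{n0} to conclude Property $(Q)$. Democracy follows by applying (i) to $w = (1+\delta)1_A + 1_B$ for disjoint $A, B$ of equal cardinality $N$ with $m = N$: greedy removes the $A$-block, leaving $w - \mathcal{G}_m(w) = 1_B$, while the choice $D = B$, $\beta = 1$ in the definition of $\mathcal{D}_m$ yields $\mathcal{D}_m(w) \le \|w - 1_B\| = (1+\delta)\|1_A\|$; (i) and the limit $\delta \to 0^+$ then give $\|1_B\| \le C\|1_A\|$. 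Suppression unconditionality $\|u\| \le K\|u + v\|$ (with $uv = 0$) will be derived by a two-step boost-and-cancel argument that decomposes $v = v^+ + v^-$ into its positive and negative parts.

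Step one applies (i) to $w_1 = u + v^+ + \mu 1_{supp(v^+)}$ with $\mu > \|\tilde u\|_\infty$: on $supp(v^+)$ the coefficients of $w_1$ are $v^+_j + \mu > \mu$, strictly larger than any coefficient on $supp(u)$, so greedy with $m_1 = |supp(v^+)|$ strips off the $v^+$-block and leaves $w_1 - \mathcal{G}_{m_1}(w_1) = u$; choosing $D = supp(v^+)$ and $\beta = \mu$ gives $\mathcal{D}_{m_1}(w_1) \le \|w_1 - \mu 1_{supp(v^+)}\| = \|u + v^+\|$, so (i) yields $\|u\| \le C\|u + v^+\|$. Step two applies (i) to $w_2 = u + v - \mu 1_{supp(v^-)}$ with $\mu > \max(\|\tilde u\|_\infty, \|\widetilde{v^+}\|_\infty)$: on $supp(v^-)$ the coefficients have magnitude $|e_n^*(v^-)| + \mu > \mu$, so greedy with $m_2 = |supp(v^-)|$ strips off the $v^-$-block and leaves $w_2 - \mathcal{G}_{m_2}(w_2) = u + v^+$; choosing $D = supp(v^-)$ and $\beta = -\mu$ gives $\mathcal{D}_{m_2}(w_2) \le \|w_2 + \mu 1_{supp(v^-)}\| = \|u + v\|$, so (i) yields $\|u + v^+\| \le C\|u + v\|$. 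Composing, $\|u\| \le C^2 \|u + v\|$, so the basis is $C^2$-suppression unconditional. Combined with $C$-democracy, Remark~\ref{n0} delivers Property $(Q)$ with constant at most $C^2(1 + C)$, closing the cycle. The main cost of this indirect route is the inflation of the constant from $C$ to $C^2(1+C)$; recovering the sharper same-constant bound claimed in the introduction would require a more direct single-step construction handling both signs of $v$ simultaneously, presumably by passing through Property $(Q^*)$ and Theorem~\ref{t0}.
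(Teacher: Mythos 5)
Your proposal is correct and follows essentially the same route as the paper: reduce everything to (i) $\Rightarrow$ (ii) via Remark \ref{n0} and Theorem KT, prove democracy with the perturbation $(1+\delta)1_{A}+1_B$, and extract suppression unconditionality by adding a large multiple of $1_{supp(v)}$ so that the greedy algorithm strips it off while $\mathcal{D}_m$ discards it for free. The only differences are quantitative: the paper handles both signs of $v$ in a single step by taking $\alpha>\sup_{j}|e_j^*(u)|+\sup_j|e_j^*(v)|$ (so the shifted coefficients on $supp(v)$ still dominate those of $u$), obtaining $C$-suppression unconditionality rather than your $C^2$, and it runs the democracy argument on $A\setminus B$ and $B\setminus A$ for arbitrary equal-cardinality sets rather than only disjoint ones; neither loss affects the equivalence.
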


\begin{proof} Due to Remark \ref{n0} and Theorem KV only the implication (i) $\Rightarrow$ (ii)  requires a proof.

Assume (i).
We shall see first that the basis is democratic. Let $A,B$ with $\vert A\vert = \vert B\vert = n$ and $m = \vert A\setminus B\vert = \vert B\setminus A\vert$. Define, for each $\varepsilon >0$, $x = (1+\varepsilon)1_{A\setminus B}+1_B$ and observe that $\mathcal{G}_m(x) = (1+\varepsilon)1_{A\setminus B}$. Hence,
				\begin{eqnarray*}
					\Vert 1_B\Vert = \Vert x-\mathcal{G}_m(x)\Vert \leq C\mathcal D_m(x)\leq C\Vert x-1_{B\setminus A}\Vert \leq
					C\Vert 1_A\Vert + C\varepsilon\Vert 1_{A\cap B}\Vert.
				\end{eqnarray*}
				Now take the limit as $\varepsilon\to 0$ to complete the argument.

Let us now prove the unconditionality of $\mathcal B$. Let  $x\in \X_c$ and $supp(x)= B$. Let $A\subseteq B$ and write $m=|B\setminus A|$. Select $\alpha >0$ such that $$\alpha > \underset{j\in A}{\sup}\vert e_j^*(x)\vert+\underset{j\in B\setminus A}{\sup}\vert e_j^*(x)\vert,$$
				and define $$y = x+\alpha1_{B\setminus A} = \sum_{j\in B\setminus A}(\alpha + e_j^*(x))e_j + \sum_{j\in A}e_j^*(x)e_j.$$
				Hence $\mathcal{G}_m(y) = \sum_{j\in B\setminus A}(\alpha + e_j^*(x))e_j$ and $P_{A}(x) = y - \mathcal{G}_m(y)$. Then, $$\Vert P_{A}(x)\Vert = \Vert y-\mathcal{G}_m(y)\Vert \leq C\mathcal D_m(y)\leq C\Vert y-\alpha1_{B\setminus A}\Vert = C\Vert x\Vert.$$
\end{proof}

\begin{prop} \label{equi} Let $\mathcal B$ be a basis of $\X$. The following statements are equivalent:

(i) There exists $C>0$ such that \begin{equation} \label{qp*}\|x+ 1_{ \e A} \|\le C \|x + 1_{\e' B}+ y\| \end{equation}
 \noindent for any  $A, B$ such that $A\cap B=\emptyset$ and $|A|=|B|$, any $(\e_n)_{n\in A}, (\e'_n)_{n\in B}\in \{\pm 1\}$ and any $x, y\in \X_c$ such that  $xy=0$, $\|\tilde x\|_\infty\le 1$ and $(A\cup B)\cap (supp(x+y))=\emptyset$.

 (ii) $\mathcal B$ has Property $(Q^*)$ with constant $C$.

 (iii) There exists $C>0$ such that
\begin{equation}\label{ultima}
\|x\|\le C \|x- P_A(x)+ t y \|
\end{equation}
for any  $x\in \X_c$, $t\geq \|\tilde x\|_\infty$, finite set $A$  and $y\in \Gamma_{P_A(x)}$ with $xy=0$.
\end{prop}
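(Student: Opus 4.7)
The plan is to prove (ii) $\Leftrightarrow$ (i) and (ii) $\Leftrightarrow$ (iii) separately, since Property $(Q^*)$ is the most symmetric formulation and serves as a convenient hub.

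For (ii) $\Rightarrow$ (i), I would just verify that the hypothesis of (i) is a special case of Property $(Q^*)$: take $z = 1_{\e A}$ (which satisfies $\|\tilde z\|_\infty=1$) and the ``test vector'' $1_{\e' B} + y$ in place of $y$. The only nontrivial check is that $1_{\e' B} + y \in \Gamma_{1_{\e A}}$: the set $\{n : |e_n^*(1_{\e' B}+y)|=1\}$ contains $B$ because $B\cap\mathrm{supp}(y)=\emptyset$ forces $|e_n^*(1_{\e' B}+y)|=1$ for $n\in B$, and $|B|=|A|=|\mathrm{supp}(1_{\e A})|$. The disjointness conditions follow directly from the hypotheses of (i).

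For (i) $\Rightarrow$ (ii), the key idea is exactly the convexity trick used in Remark \ref{q*}. If $z\ne 0$ and $A=\mathrm{supp}(z)$, then $\|\tilde z\|_\infty\le 1$ lets us write $z=\sum_j \lambda_j\, 1_{\e^{(j)} A}$ as a convex combination of signed indicators on $A$. On the other side, since $y\in\Gamma_z$, the set $T=\{n:|e_n^*(y)|=1\}$ has $|T|\ge |A|$, so one chooses $B\subseteq T$ with $|B|=|A|$ and $\e'_n = e_n^*(y)$ for $n\in B$, and decomposes $y = 1_{\e' B}+(y-P_B(y))$. Applying (i) to each pair $(1_{\e^{(j)} A},1_{\e' B})$ with auxiliary vector $y-P_B(y)$ gives $\|x+1_{\e^{(j)}A}\|\le C\|x+y\|$; averaging with weights $\lambda_j$ yields $\|x+z\|\le C\|x+y\|$. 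The $z=0$ case is covered by (i) with $A=B=\emptyset$.

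For (ii) $\Leftrightarrow$ (iii), the two formulations are essentially the same inequality after a change of variables. To derive (iii) from (ii), given $x,A,t,y$ as in (iii), set $u=(x-P_A(x))/t$ and $w=P_A(x)/t$. Then $uw=0$, both $\|\tilde u\|_\infty$ and $\|\tilde w\|_\infty$ are bounded by $\|\tilde x\|_\infty/t\le 1$, and $\mathrm{supp}(w)=\mathrm{supp}(P_A(x))$ gives $y\in\Gamma_{P_A(x)}=\Gamma_w$, while $uy=wy=0$ follow from $xy=0$ and $y\in\Gamma_{P_A(x)}$. Property $(Q^*)$ applied to $(u,w,y)$ and then multiplied by $t$ yields $\|x\|\le C\|x-P_A(x)+ty\|$. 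Conversely, given the $(Q^*)$ hypothesis on $(x,z,y)$, set $x'=x+z$, $A=\mathrm{supp}(z)$, $t=1$; then $P_A(x')=z$ (because $xz=0$ kills $P_A(x)$), $\Gamma_{P_A(x')}=\Gamma_z$, and (iii) directly gives $\|x+z\|\le C\|x+y\|$.

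I expect the main obstacle to be purely bookkeeping: in each direction one must carefully verify the handful of disjointness conditions ($xy=0$, $zy=0$, $xz=0$, $\mathrm{supp}(x)\cap(A\cup B)=\emptyset$) and the $\ell_\infty$-bounds after scaling by $t$. No new idea beyond the convexity argument already present in Remark \ref{q*} is required.
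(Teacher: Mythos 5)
Your proposal is correct and uses essentially the same ingredients as the paper's proof: the convexity decomposition $z=\sum_j\lambda_j 1_{\e^{(j)}A}$ for (i)$\Rightarrow$(ii), the rescaling $x\mapsto x/t$ with $z_1=P_A(x/t)$ for (ii)$\Rightarrow$(iii), and routine specializations for the remaining directions. The only difference is organizational (you use (ii) as a hub with four implications, while the paper closes the cycle (i)$\Rightarrow$(ii)$\Rightarrow$(iii)$\Rightarrow$(i)), which changes nothing of substance.
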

\begin{proof}

 (i) $\Rightarrow$ (ii) Let $x,y,z\in \X_c$ with pairwise disjoint supports with $\max\{\|\tilde x\|_\infty, \|\tilde z\|_\infty\}\le 1$ and $y\in \Gamma_z$.

 For $z=0$ we apply (\ref{qp*}) with $A=B=\emptyset$ to obtain $\|x\|\le C\|x+y\|$.

 For $z\ne 0$, denote $A=supp(z)$ and $B_1=\{n\in supp(y): |e^*_n(y)|=1\}$. Since $|B_1|\ge |A|$ we select $B\subseteq B_1$ with $|B|=|A|$ and write $y= P_{B}(y) +P_{B^c}(y)=  1_{\e' B}+ P_{B^c}(y)$ where $\e'_n=\frac{e^*_n(y)}{|e^*_n(y)|}$ for $n\in B$.  From (\ref{qp*}) we have
 $$\|x+ 1_{ \e A} \|\le C \|x + 1_{\e' B}+ P_{B^c}(y)\|=C\|x+y\|,\quad \forall (\e_n)\in \pm 1.$$
 Notice that $\|\tilde z\|_\infty\le 1$ implies that $z\in co(\{1_{\e A}: |\e_n|=1\})$. Hence $x+ z= \sum_{j=1}^m \lambda_j (x+ 1_{\e^{(j)} A})$ for some $|\e^{(j)}_n|=1$ and $0\le \lambda_j\le 1$ with $\sum_{j=1}^m \lambda_j=1$ and we obtain $\|x+z\|\le C \|x+y\|$.

  (ii) $\Rightarrow$ (iii) Let $x,y\in \X_c$ with $xy=0$, $t\geq \|\tilde x\|_\infty$ and a finite set $A$ with $y\in \Gamma_{P_A(x)}$.

  In the case $A\cap supp(x)=\emptyset$ we have $P_Ax=0$ and  from (\ref{qpstar}) one gets $\|\frac{x}{t}\|\le C\|\frac{x}{t}+ u\|$ for any $u\in \X_c$ with $xu=0$.

   In the case $A\cap supp(x)\neq \emptyset$, let $x_1=\frac{x}{t}-P_A(\frac{x}{t})$, $z_1= P_A(\frac{x}{t})$ and $y_1=y$. Since $\max\{\|\tilde x_1\|_\infty, \|\tilde z_1\|_\infty\}\le 1$  and $y\in \Gamma_{z_1}$ we can apply  (\ref{qpstar}) to obtain $$ \|x\|=t\|x_1 + z_1\|\le C\|x-P_A(x)+ ty\|.$$

  (iii) $\Rightarrow$ (i) Let two finite and disjoint sets $A$ and $B$ with $|A|=|B|$, $(\e_n)_{n\in A}, (\e'_n)_{n\in B}\in \{\pm 1\}$,  $x, y\in \X_c$ such that $\|\tilde x\|_\infty\le 1$ with $xy=0$ and $(A\cup B)\cap (supp(x)\cup supp(y))=\emptyset$.  We apply (\ref{ultima}) for $t=1$, the set $A$  and  $u,v\in \X_c$ given by $u= x + 1_{\e A}$ and $v=1_{\e' B}+y$, since $\|\tilde u\|_\infty\le 1$, $v\in \Gamma_{1_{\e A}}$ and $supp(u)\cap supp(v)=\emptyset$.  Therefore
  $$ \|x + 1_{\e A}\|=\|u\|\le C \|u-P_A(u)+v\|=C \|x+ 1_{\e' B}+y\|.$$
This finishes the proof.
  \end{proof}
\begin{lem}\label{lem0}
			Let $\mathcal{B}$ be a Schauder basis of a Banach space $\mathbb{X}$, $x\in \mathbb X$ and a finite set $A$. Then
 $$ \sup\{\|x+1_{\e A}\|: |\e_n|=1\}= \sup\{\|x+u\|: supp (u)=A, \|\tilde u\|_\infty \le 1\}$$
 and
 $$ \sup_{B\subset A}\|x+1_B\| \le \sup\{\|x+1_{\e A}\|: |\e_n|=1\}\le 3 \sup_{B\subset A}\|x+1_B\|.$$
			\end{lem}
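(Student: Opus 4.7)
The plan is to establish the equality first and then to derive both inequalities in the second display from it together with the triangle inequality.

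For the equality, the bound $\le$ is immediate because each sign vector $1_{\e A}$ satisfies $supp(1_{\e A})=A$ and $\|\widetilde{1_{\e A}}\|_\infty=1$, so it is an admissible $u$ on the right-hand side. For the reverse direction, the key observation, already exploited in Remark \ref{q*}, is that the closed $\ell^\infty$-unit ball of $\mathbb{R}^A$ coincides with the convex hull of the $2^{|A|}$ sign vertices $\{\pm 1\}^A$. Hence any $u\in \X_c$ with $supp(u)\subseteq A$ and $\|\tilde u\|_\infty\le 1$ can be written as $u=\sum_j \lambda_j 1_{\e^{(j)}A}$ with $\lambda_j\ge 0$ and $\sum_j \lambda_j=1$, and convexity of the norm yields
\[
\|x+u\|=\Bigl\|\sum_j \lambda_j(x+1_{\e^{(j)}A})\Bigr\|\le \sup\{\|x+1_{\e A}\|:|\e_n|=1\}.
\]

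For the second display, the left inequality is immediate from the equality, since for every $B\subseteq A$ the vector $u=1_B$ satisfies $supp(u)\subseteq A$ and $\|\tilde u\|_\infty\le 1$. For the right inequality I would fix $\e\in \{\pm 1\}^A$, split $A=A_+\cup A_-$ with $A_\pm=\{n\in A:\e_n=\pm 1\}$, and use $1_{\e A}=1_{A_+}-1_{A_-}$. Two applications of the triangle inequality then give
\[
\|x+1_{\e A}\|\le \|x+1_{A_+}\|+\|1_{A_-}\|\le \|x+1_{A_+}\|+\|x+1_{A_-}\|+\|x\|,
\]
and each of the three terms on the right-hand side is of the form $\|x+1_B\|$ for some $B\subseteq A$, the last one corresponding to $B=\emptyset$; this produces the factor $3$.

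No step presents a serious obstacle: the only subtlety worth emphasising is the convex combination representation when some coordinates of $u$ happen to vanish, which is handled by the trivial identity $0=\tfrac12(+1)+\tfrac12(-1)$ on the missing coordinates of $A$. Everything else is triangle inequality and bookkeeping.
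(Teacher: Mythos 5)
Your proof is correct and follows essentially the same route as the paper: the convex-hull representation of the $\ell^\infty$-ball over the sign vertices for the equality (and for $\sup_B\|x+1_B\|\le\sup_\e\|x+1_{\e A}\|$), and a split of $1_{\e A}$ into $1_{A_+}-1_{A_-}$ with the triangle inequality for the factor $3$. The only cosmetic difference is the final decomposition (you bound by $\|x+1_{A_+}\|+\|x+1_{A_-}\|+\|x\|$ whereas the paper writes $x+1_{\e A}=2(x+1_{A_+})-(x+1_A)$), both of which yield the same constant.
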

			\begin{proof} Denote $$I_1=\sup_{B\subset A}\|x+1_B\|,$$
$$I_2=\sup\{\|x+1_{\e A}\|: |\e_n|=1\},$$
$$I_3= \sup\{\|x+u\|: supp (u)=A, \|\tilde u\|_\infty \le 1\}.$$

Of course $I_1\le I_2$ since each $B\subseteq A$ can be written as $1_B= \frac{1}{2} \big(1_A +(1_B - 1_{A\setminus B})\big)$.

On the other hand $I_2\le I_3$ follows trivially selecting $u= 1_{\e A}$.
The other  inequality $I_2\ge I_3$ follows using the same argument as in Proposition \ref{equi} since  any $u\in \X$ with $\|\tilde u\|_\infty\le 1$ and $supp(u)=A$ satisfies that $u=\sum_{j\in A}e_j^*(y) e_j \in co(\lbrace 1_{\varepsilon A} : \vert \varepsilon_n\vert = 1)$.

			 For the remaining inequality, denote $A^+ := \lbrace j\in A : \varepsilon_j =  1\rbrace$ and $A^- := \lbrace j\in A : \varepsilon_j =  -1\rbrace$. Since  $1_{\varepsilon A} = 1_{A^+}-1_{A^-}$, with $A^+, A^- \subset A$, we can write $x+1_{\e A}= 2(x+1_{A^+})- (1_{A}+x)$ and therefore
 $\|x+1_{\e A}\|\le 3 I_3$
 and we obtain $I_2\le 3I_3$.
		\end{proof}

\begin{thm} \label{t0} Let $\X$ be a Banach space and $\mathcal{B}$ a Schauder basis of $\X$. $\mathcal{B}$ has Property $(Q)$ if and only if $\mathcal{B}$ has Property $(Q^*)$.
 \end{thm}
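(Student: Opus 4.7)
The plan is to prove both implications separately. The direction $(Q^*)\Rightarrow(Q)$ is essentially a specialization: given $x,y,A,B$ satisfying the hypotheses of Property $(Q)$, I would apply Property $(Q^*)$ with the same $x$, with $z:=1_A$, and with $y':=1_B+y$ in place of $y$. The three supports are pairwise disjoint, $\|\tilde z\|_\infty=1$, and $y'\in\Gamma_z$ since $|supp(z)|=|A|=|B|\leq|\{n:|e_n^*(y')|=1\}|$; the conclusion of $(Q^*)$ reads $\|x+1_A\|\leq C\|x+1_B+y\|$, which is Property $(Q)$ with the same constant.

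For the converse $(Q)\Rightarrow(Q^*)$, suppose $\mathcal{B}$ has Property $(Q)$ with constant $C$. Remark \ref{n0} gives that $\mathcal{B}$ is $C$-suppression unconditional, so $\|w-2P_S(w)\|=\|P_{S^c}(w)-P_S(w)\|\leq 2C\|w\|$ for every $w\in\X$ and every $S\subseteq\N$. Fix $x,z,y$ satisfying the hypotheses of $(Q^*)$ and put $A:=supp(z)$. Since $\|\tilde z\|_\infty\leq 1$, the vector $z$ is a convex combination of $\{1_{\e A}\}_\e$, so combining convexity with Lemma \ref{lem0} yields
$$\|x+z\|\leq\max_\e\|x+1_{\e A}\|\leq 3\max_{A'\subseteq A}\|x+1_{A'}\|.$$

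The crux is to bound $\|x+1_{A'}\|$ by $\|x+y\|$ for each $A'\subseteq A$. Because $|A'|\leq|A|\leq|\{n:|e_n^*(y)|=1\}|$ and $supp(z)\cap supp(y)=\emptyset$, I would choose $B'\subseteq\{n:|e_n^*(y)|=1\}$ with $|B'|=|A'|$, automatically disjoint from $A'$. Setting $\tilde y:=y-P_{B'}(y)$, whose support lies in $supp(y)\setminus B'$ and is therefore disjoint from $A'\cup B'\cup supp(x)$, Property $(Q)$ gives $\|x+1_{A'}\|\leq C\|x+\tilde y+1_{B'}\|$. Writing $P_{B'}(y)=1_{\e^*B'}$ with $\e^*_n=e_n^*(y)\in\{\pm 1\}$ and $B'^-:=\{n\in B':\e^*_n=-1\}$, a direct computation yields $x+\tilde y+1_{B'}=(x+y)+2\cdot 1_{B'^-}=(x+y)-2P_{B'^-}(x+y)$, so the sign-flip estimate above gives $\|x+\tilde y+1_{B'}\|\leq 2C\|x+y\|$. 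Chaining the three inequalities produces $\|x+z\|\leq 6C^2\|x+y\|$, establishing Property $(Q^*)$.

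The main obstacle is reconciling the signless formulation of $(Q)$ with the signed content implicit in $(Q^*)$: the support-disjointness hypothesis in $(Q)$ forces one to remove $P_{B'}(y)$ before applying $(Q)$, and the subsequent need to reintroduce the correct signs $\e^*$ via sign-unconditionality is precisely what produces the quadratic factor $C^2$ in the final constant.
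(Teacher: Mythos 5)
Your proof is correct and follows essentially the same strategy as the paper's: the $(Q^*)\Rightarrow(Q)$ direction by specialization, and the converse via the convexity reduction to $\|x+1_{\e A}\|$, Lemma \ref{lem0}, suppression unconditionality (Remark \ref{n0}), and Property $(Q)$. The one genuine difference is in how the signs on the $y$ side are handled: the paper first inflates $A'$ to $A$ by suppression unconditionality, applies $(Q)$ to the full pair $(A,B)$, and then pays an extra factor to pass from $1_B$ to $1_{\e'B}$, arriving at constant $6C^3$; you instead apply $(Q)$ directly to the pair $(A',B')$ with $|B'|=|A'|$ chosen inside $\{n:|e_n^*(y)|=1\}$ and absorb $1_{B'}$ into $x+y$ via the single sign-flip identity $x+\tilde y+1_{B'}=(x+y)-2P_{B'^-}(x+y)$, which gives the slightly better constant $6C^2$.
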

 \begin{proof}
Of course Property $(Q^*)$
 implies Property $(Q)$.
 Assume that $\mathcal B$  has the Property $(Q)$. In particular
\begin{equation}\label{unc}
\|P_M(z)\|\le C \|z\|,\, z\in \X_c,\, |M|<\infty.
\end{equation}

Let  $|\e_n|=|\e'_n|=1$, $|A|=|B|$, $A\cap B=\emptyset$ and $x, y\in \X_c$ with $xy=0$, $\|\tilde x\|_\infty\le 1$ and $supp (x+y)\cap (A\cup B)=\emptyset.$
By \eqref{unc} and Property $(Q)$, for each $A'\subset A$
\begin{equation}\label{unc2}
\|x+ 1_{A'} \|\le C\| x+1_A\| \le C^2 \|x +y+ 1_{B}\|,\, A'\subset A.
\end{equation}
Applying Lemma  \ref{lem0}, together with  (\ref{unc}) and (\ref{unc2}), we obtain, for $1_{\e' B}=1_{B^+}-1_{B^-}$,
\ba
\|x+ 1_{\e A} \|&\le& 3\sup_{A'\subset A}\| x+1_{A'}\| \le 3C^2 \|x +y+ 1_{B}\|\\
&\le&  3C^2 (\|x +y+ 1_{B^+}\| + \| 1_{B^-}\|) \\
&\le& 6C^3 \|x +y+ 1_{\e' B}\|. \ea
This shows (\ref{qp*}) and therefore $\mathcal B$ has Property $(Q^*)$ invoking Proposition \ref{equi}.
\end{proof}

Let us mention the following result whose proof is borrowed from \cite{AW}.
\begin{prop} \label{estima} Let $\mathcal{B}$ be a $C$-suppression unconditional basis of $\X$. Let $x\in \X_c$, $A\subseteq supp(x)$ and  $\e_n=\frac{e_n^*(x)}{|e_n^*(x)|}$ for $n\in A$. Then
\begin{equation}\label{est}\|\sum_{n\in B}e_n^*(x)e_n+ t1_{\e A}\|\le C \|x\|\end{equation}
for each $B\subset supp(x)\setminus A$ and $t\le \min\{|e_n^*(x)|:n\in A\}$.
\end{prop}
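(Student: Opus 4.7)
The plan is to realize the vector $y := \sum_{n\in B}e_n^*(x)e_n + t\,1_{\e A}$ as a convex combination of projections $P_M(x)$ with $B \subseteq M \subseteq A\cup B$, and then invoke the $C$-suppression unconditionality $\|P_M(x)\|\le C\|x\|$ together with the convexity of the norm. Since a convex combination of vectors each of norm at most $C\|x\|$ has norm at most $C\|x\|$, the desired inequality follows at once.

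To build the combination, I would first enumerate $A=\{a_1,\dots,a_k\}$ so that $|e_{a_1}^*(x)|\ge |e_{a_2}^*(x)|\ge\cdots\ge |e_{a_k}^*(x)|$, and introduce the ratios $\theta_i := t/|e_{a_i}^*(x)|$. The hypothesis $t\le \min_{n\in A}|e_n^*(x)|$ ensures each $\theta_i\in [0,1]$, while the chosen enumeration makes $(\theta_i)$ non-decreasing. The key identity is the sign equality $t\e_{a_i}e_{a_i} = \theta_i\, e_{a_i}^*(x)\,e_{a_i}$, forced by $\e_n = e_n^*(x)/|e_n^*(x)|$. Setting $\theta_0:=0$, $\lambda_i := \theta_i-\theta_{i-1}$ for $1\le i\le k$, and $\lambda_{k+1}:= 1-\theta_k$, one checks that these weights are non-negative and sum to $1$.

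Next I would take the sets $M_j := B\cup\{a_j,a_{j+1},\dots,a_k\}$ for $1\le j\le k$ and $M_{k+1}:=B$, and verify coordinate by coordinate that $y=\sum_{j=1}^{k+1}\lambda_j P_{M_j}(x)$: for $n\in B$ the combined coefficient telescopes to $e_n^*(x)$ because $n$ belongs to every $M_j$; for $n=a_i\in A$, the sets containing $a_i$ are exactly those with $j\le i$, yielding combined coefficient $\theta_i e_{a_i}^*(x)= t\e_{a_i}$; for $n\in supp(x)\setminus(A\cup B)$ no $M_j$ contains $n$, so the coefficient is $0$; and for $n\notin supp(x)$ there is nothing to check. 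Together with $\|P_{M_j}(x)\|\le C\|x\|$, this yields the claim.

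Rather than a genuine obstacle, the only subtlety is bookkeeping: sorting $A$ by decreasing $|e_n^*(x)|$ is precisely what is needed to make the differences $\lambda_i = \theta_i-\theta_{i-1}$ non-negative, and the bound $t\le \min_{n\in A}|e_n^*(x)|$ is precisely what is needed for $\theta_k\le 1$, so that $\lambda_{k+1}\ge 0$. The edge cases $A=\emptyset$ or $t=0$ reduce immediately to $\|P_B(x)\|\le C\|x\|$, so no separate argument is required.
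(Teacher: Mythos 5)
Your proof is correct and is essentially the same argument as the paper's: the paper writes $\sum_{n\in B}e_n^*(x)e_n+t1_{\e A}$ as $\int_0^1 P_{A_s}(x)\,ds$ for the step function $s\mapsto P_{A_s}(x)$ with $A_s=B\cup\{n\in A: s\le t/|e_n^*(x)|\}$, and your sorted telescoping weights $\lambda_j$ are exactly the lengths of the intervals on which that step function is constant, so your finite convex combination is the evaluated form of their integral. Both proofs then conclude by suppression unconditionality and convexity of the norm.
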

\begin{proof} Given $B\subset supp(x)\setminus A$ and $t\le \min\{|e_n^*(x)|:n\in A\}$ we define
$$f_{t,B}(s)=\sum_{n\in B}e_n^*(x)e_n+ \sum_{n\in A} \chi_{[0,\frac{t}{|e^*_n(x)|}]}(s)e_n^*(x)e_n\in \X_c, \quad 0\le s\le 1.$$
Note that $f_{t,B}(s)=P_{A_s}x$, and then we have that $\|f_{t,B}(s)\|\le C\|x\|$ and $$\sum_{n\in B}e_n^*(x)e_n+ t1_{\e A}=\int_0^1 f_{t,B}(s)ds.$$ Hence 
using vector-valued Minkowski's inequality (\ref{est}) is achieved.
\end{proof}

 \begin{thm} \label{t1} Let $\X$ be a Banach space and $\mathcal{B}$ a Schauder basis of $\X$.

  (i) If there exists  $C>0$ such that
    $$\Vert x-\mathcal{G}_m(x)\Vert \leq C\mathcal{D^*}_m(x),\; \forall x\in \X,\; \forall m\in \mathbb{N},$$
    then $\mathcal B$ has Property $(Q^*)$ with constant $C$.

   (ii) If $\mathcal B$ has Property $(Q^*)$ with constant $C$ then
    $$\Vert x-\mathcal{G}_m(x)\Vert \leq C^2\sigma_m(x),\; \forall x\in \X,\; \forall m\in \mathbb{N}.$$

 \end{thm}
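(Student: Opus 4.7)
My plan is to adapt the constructions from Proposition~\ref{p0}(i) to the signed functional $\mathcal{D}^*_m$ and, for the converse, to couple the characterization~(iii) of Property~$(Q^*)$ in Proposition~\ref{equi} with a single application of Proposition~\ref{estima}.

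For part~(i), I would extract two ingredients from the hypothesis in turn. First, $C$-suppression unconditionality follows exactly as in Proposition~\ref{p0}(i): for $x\in\X_c$ with $supp(x)=B$, $A\subseteq B$ and $\alpha>\max_j|e_j^*(x)|$, the vector $y=x+\alpha 1_{B\setminus A}$ satisfies $y-\mathcal{G}_{|B\setminus A|}(y)=P_A(x)$, so the hypothesis combined with $\mathcal{D}^*_{|B\setminus A|}(y)\le\|y-\alpha 1_{B\setminus A}\|=\|x\|$ gives $\|P_A(x)\|\le C\|x\|$. Second, to obtain the Lemma~\ref{lema} form with constant~$C$, I take disjoint $|A|=|B|=m$, $x\in\X_c$ with $\|\tilde x\|_\infty\le 1$ and $supp(x)\cap(A\cup B)=\emptyset$, and consider $w=x+1_{\e A}+(1+\epsilon) 1_{\e' B}$; the top-$m$ coefficients of $w$ are precisely those of $(1+\epsilon) 1_{\e' B}$, so $w-\mathcal{G}_m(w)=x+1_{\e A}$, and the bound $\mathcal{D}^*_m(w)\le\|w-1_{\e A}\|=\|x+(1+\epsilon) 1_{\e' B}\|$ together with the hypothesis yields $\|x+1_{\e A}\|\le C\|x+(1+\epsilon) 1_{\e' B}\|$; letting $\epsilon\to 0$ gives Property~(A) with constant~$C$. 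Remark~\ref{q*} then packages these into Property~$(Q^*)$. The main obstacle is recovering the stated constant~$C$ rather than~$C^2$; my remedy is to run a single construction $w=x+1_{\e A}+\alpha(1_{\e' B}+y)$ with $\alpha>\|\tilde y\|_\infty$ and apply the hypothesis directly, so that only one pass of the greedy algorithm is used and the form~(i) of Proposition~\ref{equi} is obtained in one step.

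For part~(ii), I apply the characterization~(iii) of Property~$(Q^*)$ to $x':=P_{\Lambda^c}(x)$, where $\Lambda$ is the greedy support of $x$. Fix any $u$ supported in a set $A$ of size $m$ with $\|x-u\|$ close to $\sigma_m(x)$, and set $A_1=A\setminus\Lambda$, $\Lambda_1=\Lambda\setminus A$ (so $|A_1|=|\Lambda_1|$), $\beta=\min_{n\in\Lambda}|e_n^*(x)|$, and $\e_n=e_n^*(x)/|e_n^*(x)|$ for $n\in\Lambda_1$. With threshold $t=\beta\ge\|\tilde{x'}\|_\infty$ and $y=1_{\e\Lambda_1}$, the conditions $y\in\Gamma_{P_{A_1}(x')}$ and $x'y=0$ hold by $|\Lambda_1|=|A_1|$ and $\Lambda_1\subseteq\Lambda$, so Proposition~\ref{equi}(iii) yields
$$\|P_{\Lambda^c}(x)\|\le C\,\|P_{(A\cup\Lambda)^c}(x)+\beta\,1_{\e\Lambda_1}\|.$$
Since $u\equiv 0$ on both $(A\cup\Lambda)^c$ and $\Lambda_1$, the right-hand side equals $\|P_{(A\cup\Lambda)^c}(x-u)+\beta\,1_{\e\Lambda_1}\|$, and Proposition~\ref{estima} applied to $x-u$ (with $\beta\le\min_{n\in\Lambda_1}|e_n^*(x-u)|$ satisfied because $u$ vanishes on $\Lambda_1$) bounds this by $K_{su}\|x-u\|\le C\|x-u\|$. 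Composing yields $\|x-\mathcal{G}_m(x)\|\le C^2\|x-u\|$; taking the infimum over $u$ gives the claim. The decisive step is the first: recognizing that Property~$(Q^*)$(iii) is precisely what is needed to swap the low-magnitude projection $P_{A_1}(x)$ outside $\Lambda$ with the constant-magnitude block $\beta\,1_{\e\Lambda_1}$ inside $\Lambda$, after which Proposition~\ref{estima} handles the passage to an arbitrary $u$ at the cost of a single factor $K_{su}\le C$.
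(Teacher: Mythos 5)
Your part (ii) is correct and is essentially the paper's own argument: apply characterization (iii) of Proposition \ref{equi} to $P_{\Lambda^c}(x)$ with the set $A\setminus\Lambda$ and $y=1_{\e(\Lambda\setminus A)}$, then finish with Proposition \ref{estima}. The only (harmless) difference is your threshold $\beta=\min_{n\in\Lambda}|e_n^*(x)|$ in place of the minimum over $\Lambda\setminus A$; you should also state explicitly the reduction to $x\in\X_c$ by density and homogeneity, since both propositions you invoke are formulated for finitely supported vectors.

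Part (i), however, has a genuine gap. Your fallback route ($C$-suppression unconditionality plus Property (A) with constant $C$, assembled via Remark \ref{q*}) only yields Property $(Q^*)$ with constant $C^2$, as you acknowledge. But your proposed remedy does not close the gap. With $w=x+1_{\e A}+\alpha(1_{\e'B}+y)$ and $m=|B|+|supp(y)|$ you do get $w-\mathcal{G}_m(w)=x+1_{\e A}$ provided $\alpha$ is large enough that $\alpha|e_n^*(y)|>1$ on $supp(y)$ (note that your condition $\alpha>\|\tilde y\|_\infty$ is not the right one), but you then need $\mathcal{D}^*_m(w)\le \|x+1_{\e'B}+y\|$, i.e.\ a competitor $\alpha'1_{\e''D}$ with $|D|=m$ and $w-\alpha'1_{\e''D}=x+1_{\e'B}+y$. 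The difference $w-(x+1_{\e'B}+y)=1_{\e A}+(\alpha-1)(1_{\e'B}+y)$ is not a scalar multiple of a signed indicator unless all $|e_n^*(y)|$ equal $1$, so no admissible competitor gives the desired bound and the single-pass argument collapses. The missing device is the paper's: perturb by the unimodular sign pattern of $y$. Set $F=supp(y)$, $\eta_n=e_n^*(y)/|e_n^*(y)|$ and $z=1_{\e A}+x+y+1_{\eta F}+(1+\delta)1_{\e'B}$. Every coefficient of $z$ on $B\cup F$ has absolute value strictly greater than $1$ (it is $1+\delta$ on $B$ and $|e_n^*(y)|+1$ on $F$) while all others are at most $1$, so $\mathcal{G}_m(z)=(1+\delta)1_{\e'B}+y+1_{\eta F}$ with $m=|B|+|F|$; and, crucially, $1_{\e A}+1_{\eta F}$ is itself a signed indicator of the set $A\cup F$, which has cardinality exactly $|A|+|F|=|B|+|F|=m$, whence $\mathcal{D}^*_m(z)\le \|z-1_{\e A}-1_{\eta F}\|=\|x+y+(1+\delta)1_{\e'B}\|$. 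Letting $\delta\to 0$ gives \eqref{qp*} with constant $C$, and Proposition \ref{equi} converts this into Property $(Q^*)$ with constant $C$.
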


\begin{proof}(i)
 Due to the equivalences in Proposition \ref{equi} we shall show  (\ref{qp*}). Let us take $\e, \e' \in \{\pm1\}$, $|A|=|B|$, $A\cap B=\emptyset$ and $x, y\in \X_c$ such that $xy=0$, $\|\tilde x\|_\infty\le 1$ and $supp(x+y)\cap (A\cup B)=\emptyset$. Let us write $F=supp (y)$, $\eta_n= \frac{e^*_n(y)}{|e^*_n(y)|}$ for $n\in F$ and
define, for each $\delta>0$, $$z= 1_{\e A} +  x + y+  1_{\eta F}+ (1+\delta)1_{\e'B}.$$

Using that $|e^*_n(y+1_{\eta F})|=|\eta_n+ e^*_n(y)|=|e^*_n(y)|(1+\frac{1}{|e^*_n(y)|})\ge 1$ for each $n\in F$ we have $\mathcal G_m(z)=(1+\delta)1_{\e'B}+ y+  1_{\eta F}$ where $m=|B|+ |F|$. Therefore
\ba
\|1_{\e A} +  x\|&=&\|z- \mathcal G_m(z)\|\\
&\le& C\mathcal{D}^*_m(z)\le C \|z- 1_{\e A}-1_{\eta F}\|\\
&=& C\|x + y+(1+\delta)1_{\e'B}\|\\
&\le& C\|x + y+1_{\e'B}\|+\delta mC.\ea
Now taking limit as $\delta$ goes to $0$ one gets (\ref{qp*}).

(ii) By density and homogeneity, it suffices to prove the result when $x$ is finitely supported with $\Vert \tilde{x}\Vert_\infty \leq 1$. Let $x\in \X_c$, $\Vert \tilde{x}\Vert_\infty \leq 1$, $m\in \N$ and let $b\in [e_n : n\in A]$ with $|A|=m$. Select $B$ with $|B|=m$ and $\mathcal G_m(x)= P_B(x).$

 Set $t= \min \lbrace |e_n^*(x)|: n\in B\setminus A\rbrace$ and set $\e_n= \frac{e_n^*(x)}{|e_n^*(x)|}$ for  $n\in  supp(x)$.

  Since $t\geq \| \widetilde{x-P_B(x)}\|_\infty$ we can apply (\ref{ultima}) for $x-P_B(x)$, the set $A\setminus B$, $y=1_{\e (B\setminus A)}$ to obtain
  $$\|x-\mathcal G_m(x)\|\le C \| x-P_B(x)-P_{A\setminus B}(x)+ t1_{\e (B\setminus A)}\|=C\|P_{(A\cup B)^c}(x-b)+t1_{\e (B\setminus A)}\|.$$
  Finally,  since $t \le |e_n^*(x-b)|$ for  $n\in B\setminus A$, applying Proposition \ref{estima} one gets
  $$ \|x-\mathcal G_m(x)\|\le C^2\|x- b\|.$$
  This gives that $\|x-\mathcal G_m(x)\|\le C^2\sigma_m(x)$ and the proof is complete.
\end{proof}

		\end{section}
	
\noindent{\it Acknowledgment:} The authors are indebted to F. Albiac and J.L. Ansorena for providing the manuscript \cite{AA2} and to G. Garrig\'os for useful conversations during the elaboration of this paper.

	
	\medskip
	
	\textsc{Instituto de Matem\'atica Pura y Aplicada, Universitat Polit\`ecnica de Val\`encia, Valencia, 46022, Spain}.\newline
	\textit{E-mail address}: \texttt{pmbl1991@gmail.com}\newline
	\medskip
	
	\textsc{Departamento de An\'alisis Matem\'atico, Universitat de Val\`encia, Campus de Burjassot, Valencia, 46100, Spain}.\newline
	\textit{E-mail address}: \texttt{oscar.blasco@uv.es}\newline
	
\end{document}